\renewcommand{\Im}{\operatorname{Im}}
\renewcommand{\Re}{\operatorname{Re}}
\renewcommand{\Im}{\operatorname{Im}}
\renewcommand{\(}{\left\(}
\renewcommand{\)}{\right\)}
\renewcommand{\[}{\left\[}
\renewcommand{\]}{\right\]}
\numberwithin{equation}{section}
 \theoremstyle{plain}
\newtheorem{theorem}{Theorem}[section]
\newtheorem{lemma}[theorem]{Lemma}
\newtheorem{corollary}[theorem]{Corollary}
\def\proof{\@ifnextchar[{\@oproof}{\@nproof}}
\def\@oproof[#1][#2]{\trivlist\item[\hskip\labelsep\textit{#2 Proof of\
#1.}~]\ignorespaces}
\def\@nproof{\trivlist\item[\hskip\labelsep\textit{Proof.}~]\ignorespaces}
\begin{document}
\title[A Dirichlet character analogue of Ramanujan's formula for odd zeta values
 ]{A Dirichlet character analogue of Ramanujan's formula for odd zeta values}

\author{Anushree Gupta}
\address{Anushree Gupta\\ Department of Mathematics \\
The Pennsylvania State University \\
State College, Pennsylvania-16802,  USA.} 
\email{afg6050@psu.edu}

\author{Md Kashif Jamal}
\address{Kashif Jamal\\ Department of Mathematics \\
University of North Carolina at Charlotte \\
9201 University City Blvd, Charlotte, NC 28223,  USA.} 
\email{kashif.sxcr@gmail.com}

\author{Nilmoni Karak}
\address{Nilmoni Karak\\ Department of Mathematics \\
Indian Institute of Technology Kharagpur\\
Kharagpur-721302,  India.}
\email{nilmonikarak1999@gmail.com,  nilmonimath@kgpian.iitkgp.ac.in}

 \author{Bibekananda Maji}
\address{Bibekananda Maji\\ Department of Mathematics \\
Indian Institute of Technology Indore \\
Simrol,  Indore,  Madhya Pradesh 453552, India.} 
\email{bibek10iitb@gmail.com,  bmaji@iiti.ac.in}

\thanks{2010 \textit{Mathematics Subject Classification.} Primary 11M06; Secondary 11J81 .\\
\textit{Keywords and phrases.} Riemann zeta function,  Odd zeta values, Ramanujan's formula,  Dirichlet $L$-function}

\maketitle

\begin{abstract}
In 2001,  Kanemitsu, Tanigawa, and Yoshimoto studied the following generalized Lambert series, 
$$
 \sum_{n=1}^{\infty} \frac{n^{N-2h} }{\exp(n^N x)-1},
 $$ 
for  $N \in \mathbb{N}$ and $h\in \mathbb{Z}$ with some restriction on $h$. Recently, Dixit and the last author pointed out that this series has already been present in the Lost Notebook of Ramanujan with a more general form. Although, Ramanujan did not provide any transformation identity for it.  In the same paper,  Dixit and the last author found an elegant generalization of Ramanujan's celebrated identity for $\zeta(2m+1)$ while extending the results of Kanemitsu et al.  In a subsequent work,  Kanemitsu et al.  explored another extended version of the aforementioned series, namely, 
$$\sum_{r=1}^{q}\sum_{n=1}^{\infty} \frac{\chi(r)n^{N-2h}{\exp\left(-\frac{r}{q}n^N x\right)}}{1-\exp({-n^N x})},$$
where $\chi$ denotes a Dirichlet character modulo $q$,  $N\in 2\mathbb{N}$ and with some restriction on the variable $h$.  In the current paper,  we investigate the above series for {\it any} $N \in \mathbb{N}$ and $h \in \mathbb{Z}$.  We obtain a Dirichlet character analogue of Dixit and the last author's identity and there by derive a two variable generalization of  Ramanujan's identity for $\zeta(2m+1)$.  Moreover,  we establish a new  identity for $L(1/3,  \chi)$ analogous to Ramanujan's famous identity for $\zeta(1/2)$.  
  
\end{abstract}

\section{Introduction}
Evaluating an explicit formula for an infinite series has always been a challenging problem in mathematics.  Euler was the first who showed that all even zeta values can be written as
\begin{equation} \label{zeta(2m)}
 \zeta(2m)=	\sum_{n=1}^{\infty} \frac{1}{n^{2m}} =  r_{2m} \pi^{2m},   \quad  m\in \mathbb{N},  
\end{equation}
where $r_{2m}= (-1)^{m+1} \frac{2^{2m}B_{2m}}{2 (2m)!}$ and $B_{2m}$ denotes the $2m$th Bernoulli number.  From the above formula,  one can immediately conclude that $\zeta(2m)$ converges to a transcendental number.  
The next instinctive question arises about the arithmetic nature of odd zeta values.
  Is there any formula for $\zeta(2m+1)$ that is similar to the formula \eqref{zeta(2m)}? 
 The  nature of $\zeta(2m+1)$, $m\in\mathbb{N}$, remains mystery except $\zeta(3)$.  In 1979,  Roger Ap\'{e}ry \cite{Apery79, Apery 80} made a breakthrough by proving that the irrationality of $\zeta(3)$.   But, we are still unaware of the algebraic nature of $\zeta(3)$. In $2001$,  Ball  and Rioval \cite{ballrivoal} made an advancement by showing that there are
  infinitely many irrational odd zeta values.  Strikingly,  Zudilin \cite{zudilin} gave an impressive result around the same period.  
He showed  that ``at least one  among $\zeta(5), \zeta(7),\zeta(9)$ and $\zeta(11)$ is irrational".    

  Ramanujan in his Second Notebook \cite[p.~173,  Entry 21(i)]{Notebook volume 2} as well as in  Lost Notebook \cite[p.~320,  Formula (28)]{lnb},  noted down the following appealing identity for  $\zeta(2m+1)$.
  
{\it  Let $\alpha,  \beta \in \mathbb{R}^{+}$ with $\alpha \beta = \pi^2$.  For $m \in \mathbb{Z}-\{0\}$,  we have 
\begin{align}\label{Ramanujan's formula}	
F_{m}(\alpha) =  (-1)^{m} F_{m}(\beta) -2^{2m} \sum_{k=0}^{m+1} (-1)^k \frac{  B_{2k}B_{2m+2-2k}}{(2k)!(2m+2-2k)!} \alpha^{m+1-k} \beta^k,  
\end{align}}
where 
\begin{align}\label{Ramanujan function F_m(x)}
F_{m}(x) =
x^{-m}  \left( \frac{1}{2} \zeta(2m +1)  +  \sum_{n=1}^{\infty} \frac{n^{-2m-1}}{e^{2n x} -1 } \right).
\end{align}
Here we note that the finite sum involving Bernoulli numbers in \eqref{Ramanujan's formula} should be considered as zero sum when $m \leq -2$.   In 1977,   Berndt \cite{Berndt77} showed that Euler's formula \eqref{zeta(2m)} and Ramanujan's formula \eqref{Ramanujan's formula} can be derived from a modular transformation property for a generalized Eisenstien series.  Various generalizations of \eqref{Ramanujan's formula} have been discovered by mathematicians over the years,  readers are encouraged to see \cite{BGK23,   Berndt77,  BS16,  BS17,  bradley,  CCVW21, CJM22,  DGKM20,  DM20, Gross72, GMR,  GM21}.  In 1972,  Grosswald \cite{Gross72} found a generalization of \eqref{Ramanujan's formula} in the upper half plane from which one can obtain transformation formula for Eisentien series over the full modular group.  
The following Lambert series,  $m \in \mathbb{Z}$,  $x>0$,   
$$\sum_{n=1}^{\infty} \frac{n^{-2m-1}}{e^{2nx} -1 }$$ present in \eqref{Ramanujan function F_m(x)}  has got wide significance due to its connection with the Fourier series expansion of the Eisenstien series.   In 2001, Kanemitsu,  Tanigawa and Yoshimoto  \cite{KTY01}  investigated its generalized form defined by
\begin{equation}\label{Kanemitsu lambert series}
	\sum_{n=1}^{\infty} \frac{n^{N-2h}}{\exp(n^N x) - 1},
\end{equation} 
where $N\in\mathbb{N}$ and $h$ is an integer lying in the interval  $[1,  N/2]$.
They were able to derive interesting identities for $\zeta(s)$ at rational arguments while studying the above infinite series.  For example,  they obtained Ramanujan's identity for $\zeta(1/2)$.  
Recently,  Dixit and the last author \cite{DM20} noticed that the series \eqref{Kanemitsu lambert series} is in fact present in the Lost Notebook \cite[p.~332]{lnb} of Ramanujan with a more general form.  However, Ramanujan did not provide any transformation identity for it.  This motivated Dixit and Maji \cite{DM20} to study  \eqref{Kanemitsu lambert series} further.  Quite unexpectedly,  they found a beautiful generalization of Ramanujan's celebrated identity for $\zeta(2m+1)$ while extending results of Kanemitsu et al.  Mainly,  Dixit and Maji \cite[Theorem 1.2]{DM20} obtained the following generalized identity which connects two distinct odd zeta values: 

{\it Suppose $N$ is an odd natural number and  $\alpha,  \beta \in \mathbb{R}^{+}$ with  $\alpha\beta^{N}=\pi^{N+1}$.  Then for $m \in \mathbb{Z}-\{0\}$,   we have
\begin{align}\label{zetageneqn}
	&\alpha^{-\frac{2Nm}{N+1}}\left(\frac{1}{2}\zeta(2Nm+1)+\sum_{n=1}^{\infty}\frac{n^{-2Nm-1}}{\textup{exp}\left((2n)^{N}\alpha\right)-1}\right)   =\left(-\beta^{\frac{2N}{N+1}}\right)^{-m}\frac{2^{2m(N-1)}}{N}  \nonumber\\
	& \times \Bigg(\frac{1}{2}\zeta(2m+1)+(-1)^{\frac{N+3}{2}}\sum_{j=-\frac{(N-1)}{2}}^{\frac{N-1}{2}}(-1)^{j}\sum_{n=1}^{\infty}\frac{n^{-2m-1}}{\textup{exp}\left((2n)^{\frac{1}{N}}\beta e^{\frac{i\pi j}{N}}\right)-1}\Bigg)\nonumber\\
	&+(-1)^{m+\frac{N+3}{2}}2^{2Nm}\sum_{j=0}^{\left\lfloor\frac{N+1}{2N}+m\right\rfloor}\frac{(-1)^jB_{2j}B_{N+1+2N(m-j)}}{(2j)!(N+1+2N(m-j))!}\alpha^{\frac{2j}{N+1}}\beta^{N+\frac{2N^2(m-j)}{N+1}}.
\end{align}}
Here we emphasize that the following integral representation of \eqref{Kanemitsu lambert series} was the starting point of the work of Dixit and Maji:  
\begin{align} \label{Dixit -Maji integral}
	\sum_{m=1}^{\infty} \frac{m^{N-2h}}{\exp(m^N x) - 1} =  \frac{1}{2\pi i} \int_{c_0-i \infty}^{c_0+ i \infty} \Gamma(s) \zeta(s) \zeta(Ns-(N-2h)) x^{-s} \mathrm{d}s,
\end{align}
where $c_0$ is some large positive quantity.  Motivated from this representation,  recently,  Gupta and the last author \cite{GM21} also studied a similar integral to find an extension of Ramanujan's identity \eqref{Ramanujan's formula} in a different direction.   In a subsequent paper,   Dixit et al.  \cite{DGKM20} also studied the above integral \eqref{Dixit -Maji integral} associated to the Hurwitz zeta function $\zeta(s,a)$,  inspired from another work of Kanemitsu et al.  \cite{Kanemitsu Hurwitz}.  
Mainly,  they investigated the following infinite series and its integral representation: 
\begin{align}\label{Hurwitz_integral}
\sum_{m=1}^{\infty} \frac{m^{N-2h} \exp\left(-a\, m^N x\right)}{\exp(m^N x) - 1}   =  \frac{1}{2\pi i} \int_{c_0-i \infty}^{c_0+ i \infty} \Gamma(s) \zeta(s,  a) \zeta(Ns-(N-2h)) x^{-s} \mathrm{d}s.  
\end{align}
While studying \eqref{Hurwitz_integral},  Dixit et al.  \cite[Theorem 2.4]{DGKM20} obtained a gigantic two-variable extension of Ramanujan's identity \eqref{Ramanujan's formula} by which they were able to connect many odd zeta values from a single identity.  
A few years later,   Kanemitsu et al.  \cite{KTY L-function} further explored a character analogue of the series \eqref{Kanemitsu lambert series},  namely,  the following infinite series and its integral representation:
\begin{equation}\label{Kanemitsu_Dirichlet L-function}
	\sum_{r=1}^{q}\sum_{n=1}^{\infty}\frac{\chi(r)n^{N-2h}\exp{\left(-\frac{r}{q}n^N x\right)}}{1-\exp({-n^N x})} =\frac{1}{2\pi i} \int_{c_0-i \infty}^{c_0+ i \infty}  \Gamma(s) L(s,  \chi) \zeta(Ns -N +2h)  \left(\frac{x}{q}\right)^{-s} \mathrm{d}s,
\end{equation}
 where $\chi$ is a Dirichlet character modulo $q$,  and for some large positive $c_0$.  
  For any $N \in 2 \mathbb{N}$ and $h \in \mathbb{Z}$ with some restriction on $h$,   they were able to obtain many interesting identities for  the Dirichlet L-function $L(s,\chi)$ at different rational arguments from a transformation formula for the infinite series \eqref{Kanemitsu_Dirichlet L-function}.  For example,  they obtained formulae for $L(1/2,  \chi)$ and $L(1/4,  \chi)$ analogous to Ramanujan's famous identity for $\zeta(1/2)$.  
   
   In this paper, we study the infinite series \eqref{Kanemitsu_Dirichlet L-function} for \emph{any} $N\in \mathbb{N}$ and $h \in \mathbb{Z}$ without any restriction on $h$.  Interestingly,  we obtain a new Dirichlet character analogue of  Ramanujan's identity \eqref{Ramanujan's formula} and as a consequence we derive an identity for $L(1/3,  \chi)$.   

Let $q\geq 1$ be an integer and $\chi$ be a Dirichlet character modulo $q$.  We define a generalized divisor function
	\begin{align}\label{Definition of new variant}
\sigma_{r,\chi}^{(N)}(n) := \sum_{d^{N}|n} d^r \chi\left(\frac{n}{d^N}\right),
\end{align}
where $r \in \mathbb{C}$ and $ N \in \mathbb{N}$.  When $q=1$,  the above divisor function reduces to  
 \begin{align*}
\sigma_{r}^{(N)}(n) := \sum_{d^{N}|n} d^r.
\end{align*}
Recently,  a Voronoi summation formula associated to $\sigma_{r}^{(N)}(n)$  has been studied by Dixit,  Maji,  and Vatwani  \cite{DMV23}.  To know more about transformation formula for Lambert series associated to  $\sigma_{r}^{(N)}(n)$,   readers are encouraged to see 
\cite{ BDG23,  BM23}.  For $\Re(s)> \max\{1,  \frac{1+r}{N} \}$,  one can check that the Dirichlet series associated to the divisor function $ \sigma_{r,\chi}^{(N)}(n)$ is the following product: 
\begin{align}\label{series}
\sum_{n=1}^\infty  \frac{\sigma_{r,\chi}^{(N)}(n)}{n^s} = L(s,  \chi) \zeta(N s - r).  
\end{align}



Now we are ready to state main results of our paper.  
\section{Main results}

	\begin{theorem}\label{Main theorem}
	Let $x \in \mathbb{R}^{+},  q \in \mathbb{N}$, and  $\chi$ be a primitive character modulo $q$.   Let $N\in \mathbb{N}$ and $h\in\mathbb{Z}$ with $N-2h \neq -1$. 
Let us define 
\begin{equation}\label{F-function}
F(2h-N,  x,  \chi):= \sum_{r=1}^{q}\sum_{n=1}^{\infty}\frac{\chi(r)n^{N-2h}\exp{\left(-\frac{r}{q}n^N x \right)}}{1-\exp({-n^N x})}, 
\end{equation}
and 
\begin{equation}\label{G-function}
 G_{j}\left( \frac{ 2h-1}{N}, x,  \chi \right) :=   \sum_{n=1}^{\infty}\frac{1}{n^{\frac{2h-1}{N}}} \frac{\chi(n)}{\exp \left( 2\pi \left( n x \right)^{1/N}\exp\left(-\frac{i\pi j}{2N}\right)\right)-1}. 
 \end{equation}
	Then,  we have
	\begin{align} \label{Main expression}
		F(2h-N,  x,  \chi)
	&	=  \zeta(-N+2h) L(0,\chi) + \mathcal{R}_1(x) \nonumber\\
	 &  + \frac{1}{N}\Gamma\left(\frac{N-2h +1}{N}\right)  L\left(\frac{N-2h+1}{N},\chi\right)\left(\frac{x}{q}\right)^\frac{2h-N-1}{N} \nonumber\\
	& +\sum_{j=1}^{2\lfloor\frac{h}{N}\rfloor + 1} \frac{(-1)^{j+1}}{(j+1)!} B_{{j+1},\chi} \  \zeta(-Nj -N +2h)\left(\frac{x}{q}\right)^{j} \nonumber\\& + \mathcal{J}_\chi(x),
	\end{align} 
where \begin{equation} \label{R(x)}
	\mathcal{R}_1(x):=\begin{cases}
		\frac{\zeta(2h)}{x},  & \text { if } q =1,  \\
	0,  & \text { if } q> 1, 
	\end{cases}\\
\end{equation}
and
\begin{align}\label{J}
	\mathcal{J}_\chi(x) := (-1)^{h+1}  \frac{\mathcal{G}(\chi)}{ N}  \left(\frac{2\pi}{x}\right)^\frac{N-2h+1}{N} \sideset{}{''}\sum_{j=-(N-1)}^{N-1} &  v_{N,\chi}(j) \exp\left(\frac{i\pi j(2h-1)}{2N}\right) \nonumber \\
	 &\times G_{j}\left( \frac{ 2h-1}{N},  \frac{2\pi}{x},  \bar{\chi} \right),
\end{align}
 where $''$ indicates that the sum over $j$ takes the values $ j=-(N-1),-(N-3), \cdots ,(N-3),(N-1)$ 
and \begin{equation} \label{vv}
	v_{N,\chi}(j)=\begin{cases}
		1,  & \text { if } \chi \text { is even, }  N \in \mathbb{N}, \\
	(-1)^\frac{N}{2} i^{j-1},  & \text { if } \chi \text { is odd, } N \in 2\mathbb{N}.
		\end{cases}
\end{equation}
\end{theorem}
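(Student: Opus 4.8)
The plan is to run the classical Mellin--Barnes/residue argument on the integral representation \eqref{Kanemitsu_Dirichlet L-function}. First I would record that \eqref{Kanemitsu_Dirichlet L-function} holds once $c_{0}>\max\{1,(N-2h+1)/N\}$: indeed, expanding the geometric series $1/(1-e^{-n^{N}x})$ and reassembling the sums over residue classes modulo $q$ (writing $m=r+kq$) gives $F(2h-N,x,\chi)=\sum_{\nu\geq1}\sigma^{(N)}_{N-2h,\chi}(\nu)\,e^{-\nu x/q}$, and then \eqref{Kanemitsu_Dirichlet L-function} drops out of \eqref{series} (with $r=N-2h$) together with the Mellin transform $e^{-t}=\frac{1}{2\pi i}\int_{(c_{0})}\Gamma(s)t^{-s}\,ds$. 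The integrand $\Gamma(s)L(s,\chi)\zeta(Ns-N+2h)(x/q)^{-s}$ is meromorphic with poles only at $s=1$ (present exactly when $q=1$, i.e.\ $L(s,\chi)=\zeta(s)$), at $s=\tfrac{N-2h+1}{N}$ (from $\zeta(Ns-N+2h)$), and at $s=0,-1,-2,\dots$ (from $\Gamma$); the hypothesis $N-2h\neq-1$ is exactly what keeps the pole at $\tfrac{N-2h+1}{N}$ from colliding with the one at $s=0$.

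Next I would shift the contour to a vertical line $\Re(s)=c_{1}$ with $-\bigl(2\lfloor h/N\rfloor+2\bigr)<c_{1}<-\bigl(2\lfloor h/N\rfloor+1\bigr)$ when $h\geq0$ (and $-1<c_{1}<0$ when $h<0$), the horizontal segments going to $0$ by Stirling's formula for $\Gamma$ together with convexity bounds for $L(\cdot,\chi)$ and $\zeta$ on vertical lines. The residue theorem then writes $F(2h-N,x,\chi)$ as the sum of the residues crossed plus $\frac{1}{2\pi i}\int_{(c_{1})}\Gamma(s)L(s,\chi)\zeta(Ns-N+2h)(x/q)^{-s}\,ds$. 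The residue at $s=1$ contributes $\zeta(2h)/x=\mathcal R_{1}(x)$ (only for $q=1$); the residue at $s=\tfrac{N-2h+1}{N}$ contributes $\tfrac1N\Gamma\!\bigl(\tfrac{N-2h+1}{N}\bigr)L\!\bigl(\tfrac{N-2h+1}{N},\chi\bigr)(x/q)^{(2h-N-1)/N}$; the residue at $s=0$ contributes $\zeta(-N+2h)L(0,\chi)$; and the residues at $s=-1,\dots,-\bigl(2\lfloor h/N\rfloor+1\bigr)$, on using $L(-j,\chi)=-B_{j+1,\chi}/(j+1)$, reproduce the finite Bernoulli sum in \eqref{Main expression}. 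The reason this $c_{1}$ is the correct choice (hence why the finite sum ends at $2\lfloor h/N\rfloor+1$): in the next step one needs $\Re(s)=c_{1}$ to lie in the half-plane $\Re(s)<\min\{0,\,1-2h/N\}$ of absolute convergence of the Dirichlet series of $L(1-s,\bar\chi)$ and of $\zeta\bigl(1-(Ns-N+2h)\bigr)$, and to lie strictly to the left of the pole of $\zeta\bigl(1-(Ns-N+2h)\bigr)$ at $s=1-2h/N$, which can sit as far left as just above $-\bigl(2\lfloor h/N\rfloor+1\bigr)$; the stated strip meets all of these requirements.

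The crux is then to evaluate the leftover integral $\frac{1}{2\pi i}\int_{(c_{1})}\Gamma(s)L(s,\chi)\zeta(Ns-N+2h)(x/q)^{-s}\,ds$ and show it equals $\mathcal J_{\chi}(x)$. On the line $\Re(s)=c_{1}$ I would apply the functional equation of $\zeta$ to $\zeta(Ns-N+2h)$, producing $\sin\!\bigl(\tfrac{\pi(Ns-N+2h)}{2}\bigr)$, $\Gamma\bigl((N-2h+1)-Ns\bigr)$ and $\zeta\bigl(1-(Ns-N+2h)\bigr)$, and the functional equation of the primitive character $\chi$ to $L(s,\chi)$, which brings in the Gauss sum $\mathcal G(\chi)$, powers of $q$ and $2\pi$, and a trigonometric factor proportional to $\cos(\pi s/2)$ or $\sin(\pi s/2)$ according as $\chi$ is even or odd. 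Expanding $L(1-s,\bar\chi)$ and $\zeta\bigl(1-(Ns-N+2h)\bigr)$ into their Dirichlet series and integrating term by term (legitimate on $\Re(s)=c_{1}$ by the absolute convergence secured above) reduces everything to integrals of the form $\frac{1}{2\pi i}\int_{(c_{1})}\Gamma(s)\,\Gamma\bigl((N-2h+1)-Ns\bigr)\bigl(\text{trigonometric factors}\bigr)Y^{s}\,ds$; here I would split $\Gamma(s)\,\Gamma\bigl((N-2h+1)-Ns\bigr)$ by the Gauss multiplication formula $\Gamma(Nz)=(2\pi)^{(1-N)/2}N^{Nz-1/2}\prod_{k=0}^{N-1}\Gamma\!\bigl(z+\tfrac kN\bigr)$, combine the resulting $N$-fold product with the two trigonometric factors, and recollect the exponential phases; re-summing the double series then reorganizes it into the sum over $j\in\{-(N-1),-(N-3),\dots,N-1\}$ of the Lambert-type series $G_{j}\bigl(\tfrac{2h-1}{N},\tfrac{2\pi}{x},\bar\chi\bigr)$ of \eqref{G-function}, with the weights $v_{N,\chi}(j)$, the phases $\exp\!\bigl(\tfrac{i\pi j(2h-1)}{2N}\bigr)$, the factor $\mathcal G(\chi)$, and the power $(2\pi/x)^{(N-2h+1)/N}$ emerging exactly as in \eqref{J}. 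The parity hypotheses --- $\chi$ even with arbitrary $N$, or $\chi$ odd with $N$ even --- are precisely the cases in which the two trigonometric factors combine into this clean form rather than a messier one.

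The hard part will be this last identification: carrying the two functional equations and the $N$-fold multiplication formula through while tracking every power of $q$ and $2\pi$, every exponential phase, and which residue classes of $j$ survive, and then recognizing the emergent double sum as the exact combination of the $G_{j}$'s with the coefficients $v_{N,\chi}(j)$ and phases recorded in \eqref{vv}--\eqref{J}. Subsidiary technical matters --- the uniform convexity bounds needed both to shift the contour and to justify the term-by-term integration, the bookkeeping of the extra $s=1$ pole when $q=1$, and the separate (still elementary) residue computations at those isolated $(N,h)$ for which a $\Gamma$-pole coincides with the pole of $\zeta(Ns-N+2h)$ --- I expect to be routine by comparison.
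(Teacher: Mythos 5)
Your overall architecture — the integral representation via the geometric series and the Hurwitz/divisor-sum identity, the shift of the contour to just left of $-(2\lfloor h/N\rfloor+1)$, the identification of the poles at $s=1$ (only for $q=1$), $s=(N-2h+1)/N$, $s=0$, and $s=-j$, and the use of $L(-j,\chi)=-B_{j+1,\chi}/(j+1)$ to produce the finite Bernoulli sum — coincides with the paper's proof, and your explanation of why the contour must stop where it does (so that the Dirichlet series of $L(1-s,\bar\chi)$ and $\zeta(1-Ns+N-2h)$ converge absolutely on the shifted line) is exactly the paper's reason. The residue bookkeeping, including the role of the hypothesis $N-2h\neq-1$, is correct.

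The genuine gap is in your plan for the crux, the evaluation of the leftover vertical integral. After the two functional equations the Gamma content of the integrand is $\Gamma(s)\Gamma(1-s)\Gamma(1-Ns+N-2h)$ together with the two sine factors; the paper first collapses $\Gamma(s)\Gamma(1-s)$ to $\pi/\sin(\pi s)$ by Euler's reflection formula, changes variables $s_1=1-Ns+N-2h$ so that a \emph{single} Gamma factor $\Gamma(s_1)$ remains, and then — this is the decisive step — applies the elementary finite trigonometric identities
\begin{equation*}
\frac{\sin(N\theta)}{\sin\theta}=\sideset{}{''}\sum_{j=-(N-1)}^{N-1}e^{ij\theta},\qquad
\frac{\sin(N\theta)}{\cos\theta}=(-1)^{N/2}\sideset{}{''}\sum_{j=-(N-1)}^{N-1}i^{j}e^{ij\theta}\ (N\ \text{even}),
\end{equation*}
with $\theta=\tfrac{\pi}{2}\cdot\tfrac{s_1+2h-1}{N}$. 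It is precisely this splitting that manufactures the $N$ rotated phases $e^{-i\pi j/2N}$ inside the exponentials of $G_j$, the prefactors $e^{i\pi j(2h-1)/2N}$, and the weights $v_{N,\chi}(j)$; each of the $N$ resulting terms is then a plain inverse Mellin transform $\frac{1}{2\pi i}\int\Gamma(s_1)w^{-s_1}\,ds_1=e^{-w}$ with $\Re(w)>0$. Your proposal replaces this by the Gauss multiplication formula applied to $\Gamma(s)\Gamma((N-2h+1)-Ns)$ (note you have also silently dropped the $\Gamma(1-s)$ coming from the functional equation of $L(s,\chi)$). The multiplication formula turns one Gamma into a \emph{product} of $N$ Gammas at arguments shifted by $k/N$, whose inverse Mellin transform is a hypergeometric-type series, not a finite sum of exponentials with arguments rotated by the $2N$-th roots of unity; it does not produce the $j$-sum, the phases, or the dichotomy between $\chi$ even (any $N$) and $\chi$ odd ($N$ even) encoded in $v_{N,\chi}(j)$. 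As written, that step would not go through; you need the reflection formula plus the finite $\sin(N\theta)/\sin\theta$ and $\sin(N\theta)/\cos\theta$ expansions (Lemma 4.3 of Dixit--Gupta--Kumar--Maji, quoted in the paper) in its place.
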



	
%
%

\subsection{A Character analogue of Ramanujan's formula for $\zeta(2m+1)$}


As an application of the above theorem,  we obtain the following identity,
which connects $L(2m+1,  {{\chi}})$ and $\zeta(2Nm+1)$.

\begin{theorem}\label{character analogue_GJKM}
Let $N\geq 1$ be an odd integer.  Let $\chi$ be an even character modulo $q$ and $\mathcal{G}(\chi)$ be the Gauss sum defined as in \eqref{Gauss sum}.  If $\alpha$ and $\beta$ be two positive real numbers such that $\alpha\beta^{N}=\pi^{N+1}$, then for any $m\in\mathbb{Z}\backslash \{0\} $, we have

\begin{align} \label{Main expression for N greater than 1}
&	 \alpha^{-\frac{2Nm}{N+1}} \left\{ \sum_{r=1}^{q}\sum_{n=1}^{\infty} \frac{\chi(r) \exp{\left(-\frac{r}{q}(2n)^{N} \alpha\right)}}{ n^{2Nm+1} \left( 1-\exp\left({-(2n)^{N} \alpha} \right)\right)} - \zeta(2Nm+1) L(0,\chi) \right\} \nonumber \\
 & = \frac{(-1)^m \beta^{-\frac{2Nm}{N+1}} 2^{2m(N-1)}}{N} \Bigg\{ \frac{q}{2 \mathcal{ G(\overline{\chi})} }  L\left(2m+1,\bar{\chi}\right) +(-1)^{{\frac{N+3}{2}}}\mathcal{G}(\chi) \sum_{j=- \frac{(N-1)}{2}}^{\frac{N-1}{2}} (-1)^j   \nonumber \\
 & \hspace{5cm} \times 
	  \sum_{n=1}^{\infty} \frac{1}{n^{2m+1}} \frac{\bar{\chi}(n)}{\exp \left( (2n)^{1/N} \beta \exp\left(-\frac{i\pi j}{N}\right)\right)-1} \Bigg\} \nonumber\\
	 &+ (-1)^{m+ \frac{N+3}{2}} 2^{2Nm}  \sum_{j=1}^{\lfloor m+ \frac{N+1}{2N} \rfloor} \frac{(-1)^{j}}{q^{2j-1}} \frac{B_{{2j},\chi}}{ (2j)! } \frac{B_{2N(m-j)+N+1}}{(2N(m-j)+N+1 )!} \alpha^{\frac{2j}{N+1}} \beta^{N+\frac{2N^2(m-j)}{N+1}} + \mathcal{R},  \nonumber
	\end{align} 
where \begin{equation} 
	\mathcal{R}:=\begin{cases}
	(-1)^{ m + \frac{N+3}{2}}  2^{2Nm} \frac{B_{2Nm+N+1}}{(2Nm+N+1)!} \beta^{N+\frac{2N^2 m}{N+1}}	,  & \text { if } q =1, \\
	0, & \text { if } q> 1. 
	\end{cases}\\
\end{equation}

\end{theorem}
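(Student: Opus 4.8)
The plan is to derive Theorem \ref{character analogue_GJKM} from Theorem \ref{Main theorem} by a careful specialization of parameters, exactly mirroring how Dixit and Maji obtained \eqref{zetageneqn}. First I would set $h = Nm$ (so that $2h - N = N(2m-1)$ and $N - 2h + 1 = -N(2m-1) + 1$; one checks $N - 2h \ne -1$ since $m \ne 0$), and I would choose the scaling $x = (2)^N \alpha$ in $F(2h-N, x, \chi)$ so that the left-hand factor $n^{N-2h}\exp(-\tfrac rq n^N x)/(1-\exp(-n^N x))$ becomes $\exp(-\tfrac rq (2n)^N\alpha)/(n^{2Nm+1}(1-\exp(-(2n)^N\alpha)))$ after relabeling $n^N x = (2n)^N \alpha$ — i.e. absorbing the $2^N$ into the summation variable. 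The constant term $\zeta(-N+2h)L(0,\chi)$ in \eqref{Main expression} becomes $\zeta(2Nm+1)L(0,\chi)$ after applying the functional equation $\zeta(1 - 2Nm) = \zeta(2Nm+1)\cdot(\text{explicit gamma/cosine factor})$, which contributes the power of $\pi$ and the $2^{2m(N-1)}$; this is where the relation $\alpha\beta^N = \pi^{N+1}$ gets used to repackage $\alpha$-powers as $\beta$-powers.

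Next I would handle the three remaining pieces of \eqref{Main expression} in turn. The term $\tfrac1N \Gamma(\tfrac{N-2h+1}{N}) L(\tfrac{N-2h+1}{N},\chi)(x/q)^{(2h-N-1)/N}$: here $\tfrac{N-2h+1}{N} = 1 - (2m-1) - \tfrac{... }{N}$, hmm — more precisely $\tfrac{N-2h+1}{N} = \tfrac{1}{N} - 2m + 1$, a negative-ish rational; I would apply the functional equation for $L(s,\chi)$ (for $\chi$ \emph{even} primitive, relating $L(1-s,\chi)$ to $L(s,\bar\chi)$ with a Gauss sum $\mathcal G(\chi)$ and a cosine factor) to convert $L(\tfrac{N-2h+1}{N},\chi)$ into $L(2m+1,\bar\chi)$ up to the factor $\tfrac{q}{2\mathcal G(\bar\chi)}$ seen on the right, again trading $\alpha$-powers for $\beta$-powers via $\alpha\beta^N=\pi^{N+1}$. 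The finite Bernoulli sum $\sum_{j=1}^{2\lfloor h/N\rfloor+1}$: with $h = Nm$ this is $\sum_{j=1}^{2m+1}$, and I would split it into even and odd $j$, noting that $B_{j+1,\chi} = 0$ for $j+1$ odd when $\chi$ is even (by parity of generalized Bernoulli numbers), so only $j = 2k$ survives, giving $\sum_k B_{2k,\chi}\zeta(-2Nk - N + 2h)(x/q)^{2k}$; then $\zeta(-2Nk + N(2m-1))$ at a negative even integer is evaluated by $\zeta(-2\ell) $ vanishing or by the functional equation into $B_{2N(m-k)+N+1}/(2N(m-k)+N+1)!$, producing the displayed double sum with the stated $\alpha^{2k/(N+1)}\beta^{N + 2N^2(m-k)/(N+1)}$ and the upper limit $\lfloor m + \tfrac{N+1}{2N}\rfloor$ coming from when that zeta value is nonzero. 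The residual term $\mathcal R$ for $q=1$ is the boundary contribution where the $j = 2m+?$ index lands exactly at the argument making $\zeta$ nonvanishing together with $\mathcal R_1(x) = \zeta(2h)/x = \zeta(2Nm)/((2)^N\alpha)$, which by Euler's formula \eqref{zeta(2m)} and $\alpha\beta^N = \pi^{N+1}$ rewrites as the stated Bernoulli expression in $\beta$.

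Finally I would transform the integral/Lambert piece $\mathcal J_\chi(x)$. With $x = 2^N\alpha$ we get $(2\pi/x)^{(N-2h+1)/N} = (2\pi/(2^N\alpha))^{1 - (2m-1) - 1/N}$, and the inner argument in $G_j(\tfrac{2h-1}{N}, \tfrac{2\pi}{x}, \bar\chi)$ is $2\pi (n \cdot \tfrac{2\pi}{x})^{1/N}\exp(-i\pi j/(2N)) = 2\pi (2\pi n/(2^N\alpha))^{1/N}\exp(-i\pi j/(2N))$; using $\alpha\beta^N = \pi^{N+1}$ one simplifies $(2\pi/(2^N\alpha))^{1/N} = \beta/(\pi \cdot 2^{?})\cdot(\dots)$ so that the exponent becomes $(2n)^{1/N}\beta\exp(-i\pi j/N)$ — note the doubled-modulus $\exp(-i\pi j /N)$ vs. $\exp(-i\pi j/(2N))$ arises because for \emph{odd} $N$ the doubly-primed sum over $j \in \{-(N-1), -(N-3), \dots, N-1\}$ can be reindexed by $j = 2\ell$, $\ell \in \{-\tfrac{N-1}{2}, \dots, \tfrac{N-1}{2}\}$, halving the angle and matching the right-hand side exactly; simultaneously, since $\chi$ is even, $v_{N,\chi}(j) = 1$, and the prefactor $(-1)^{h+1}\mathcal G(\chi)/N = (-1)^{Nm+1}\mathcal G(\chi)/N$ combines with the sign $(-1)^{(N+3)/2}$ (which appears after pushing the $2^{...}$ and $\pi^{...}$ through) to give the claimed coefficient. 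The main obstacle — and where I would spend the most care — is the bookkeeping of the numerous powers of $2$, $\pi$, $\alpha$, $\beta$ and the accumulated signs across all four terms so that everything consolidates into the single clean factor $(-1)^m\beta^{-2Nm/(N+1)}2^{2m(N-1)}/N$ on the right; in particular verifying that the gamma-factor from the $\zeta$ functional equation, the gamma-factor from the $L$ functional equation, and the gamma-factor implicit in $\mathcal J_\chi$ all carry the \emph{same} power of $2$ and $\pi$ (which is exactly what forces the constraint $\alpha\beta^N = \pi^{N+1}$ rather than some other relation). A secondary subtlety is justifying the reindexing $j \mapsto 2\ell$ in both the Bernoulli sum and the doubly-primed $G_j$ sum, which hinges on $N$ being odd.
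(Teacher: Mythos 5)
Your overall strategy is the right one and is exactly the paper's: specialize Theorem \ref{Main theorem} at a suitable $h$, set $x=2^N\alpha$ with $\alpha\beta^N=\pi^{N+1}$, convert the $L$-value by the functional equation for even primitive $\chi$, evaluate the zeta values in the Bernoulli sum by Euler's formula, and reindex the doubly-primed sum by $j=2\ell$ (legitimate precisely because $N$ is odd). However, your choice of specialization is wrong, and this breaks every subsequent step. You take $h=Nm$, which gives $n^{N-2h}=n^{N(1-2m)}$ in $F$, not the $n^{-2Nm-1}$ appearing on the left of the theorem; no ``relabeling of the summation variable'' can fix this, since $x=2^N\alpha$ only rescales the exponentials. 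The correct choice, which you can read off from requiring $\frac{N-2h+1}{N}=-2m$, is $h=Nm+\frac{N+1}{2}$ (an integer because $N$ is odd). You half-notice the problem when you compute $\frac{N-2h+1}{N}=1-2m+\frac1N$ and call it ``negative-ish,'' but the functional equation applied at $1-2m+\frac1N$ produces $L\left(2m-\frac1N,\bar\chi\right)$, not $L(2m+1,\bar\chi)$; with the correct $h$ the argument is exactly $-2m$, the product $\Gamma(-2m)L(-2m,\chi)$ is a finite $\infty\cdot 0$ limit (pole of $\Gamma$ against the trivial zero of $L$ for even $\chi$), and the functional equation gives precisely $(-1)^m\frac{q}{2\mathcal G(\bar\chi)}\left(\frac{q}{2\pi}\right)^{2m}L(2m+1,\bar\chi)$. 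Likewise, with the correct $h$ the constant term is $\zeta(-N+2h)L(0,\chi)=\zeta(2Nm+1)L(0,\chi)$ \emph{directly}, with no functional equation for $\zeta$ needed, contrary to what you assert; and in the Bernoulli sum the arguments $\zeta(-Nj-N+2h)=\zeta(N(2m-j)+1)$ land at positive even integers (handled by Euler's formula) or at trivial zeros, whereas with your $h$ they land at odd integers, which would leave unwanted odd zeta values in the answer.

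Two smaller but genuine slips: the parity selection in the finite sum is backwards --- for even $\chi$ one has $B_{j+1,\chi}=0$ for $j+1$ odd, so the surviving indices are the \emph{odd} $j=2k-1$ (matching the $q^{-(2j-1)}$ and $B_{2j,\chi}$ in the statement), not $j=2k$ as you claim; and the boundary term $\mathcal R$ for $q=1$ does not come from $\mathcal R_1(x)$ alone but from $\mathcal R_1(2^N\alpha)=\zeta(2Nm+N+1)/(2^N\alpha)$ rewritten via Euler's formula, which is a separate bookkeeping item from the $j$-sum. Until the specialization $h=Nm+\frac{N+1}{2}$ is put in place, the proof as proposed does not go through.
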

	Letting $q=1$ i.e.,  $\chi$ being the trivial character in Theorem \ref{character analogue_GJKM} and upon simplification,  one can immediately recover the identity \eqref{zetageneqn} of Dixit and Maji.  
Moreover,  substituting $N=1$ and $q=1$ in Theorem \ref{character analogue_GJKM},  we can derive Ramanujan's identity \eqref{Ramanujan's formula}.  Thus,  the above theorem can be considered as a two variable generalization of Ramanujan's formula \eqref{Ramanujan's formula}. 
	
Now considering $q>1$ and letting $N=1$ in Theorem \ref{character analogue_GJKM},  we obtain the following identity of Katayama \cite[p.~235]{Katayama74}. 
\begin{corollary}\label{Character analogue for q>1}
Let $q>1$ and  $\chi$ be a primitive even character modular $q$.   Let $\alpha$ and $\beta$ be two positive real numbers such that $\alpha \beta = \pi^2$,  then for any non-zero integer $m$, we have
{\allowdisplaybreaks
\begin{align}
& \alpha^{-m} \left\{ \sum_{r=1}^{q}\sum_{n=1}^{\infty} \frac{\chi(r) \exp{\left(-\frac{r}{q}2n \alpha\right)}}{n^{2m+1} \left( 1-\exp\left({-2n \alpha}\right) \right)}  \right\} \nonumber \\
 & = (- \beta)^{-m}  \Bigg\{ \frac{q}{2 \mathcal{ G(\overline{\chi})} }  L\left(2m+1,\bar{\chi}\right) +\mathcal{G}(\chi) \sum_{n=1}^{\infty} \frac{1}{n^{2m+1}} \frac{\bar{\chi}(n)}{\exp \left( 2n \beta\right)-1} \Bigg\} \nonumber\\
	 &+ (-1)^{m} 2^{2m}  \sum_{j=1}^{ m+ 1} \frac{(-1)^{j}}{q^{2j-1}} \frac{B_{{2j},\chi}}{ (2j)! } \frac{B_{2m-2j+2}}{(2m-2j+2)!} \alpha^{j} \beta^{m-j+1}.   \label{character_Katayama}
\end{align}}
\end{corollary}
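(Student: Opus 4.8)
The plan is to derive Corollary~\ref{Character analogue for q>1} as a direct specialization of Theorem~\ref{character analogue_GJKM}. The strategy is to set $N=1$ and assume $q>1$ throughout, then simplify each term on the right-hand side of \eqref{Main expression for N greater than 1} using the hypotheses $q>1$ (so that $\mathcal{R}=0$), $\chi$ even, and $\alpha\beta=\pi^2$ (which is exactly $\alpha\beta^N=\pi^{N+1}$ when $N=1$).

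First I would track the outer exponents: with $N=1$ the prefactor $\alpha^{-2Nm/(N+1)}$ becomes $\alpha^{-m}$, and $\beta^{-2Nm/(N+1)}2^{2m(N-1)}$ becomes $\beta^{-m}\cdot 2^0 = \beta^{-m}$, so the leading constant on the right collapses to $(-1)^m\beta^{-m} = (-\beta)^{-m}$. Next, since $N=1$ is odd with $(N+3)/2 = 2$, the sign factor $(-1)^{(N+3)/2}$ equals $1$, so the Gauss-sum term loses its extra sign. The inner sum over $j$ runs over $j=-(N-1)/2$ to $(N-1)/2$, which for $N=1$ is just the single value $j=0$; then $\exp(-i\pi j/N)=1$, so that sum reduces to $\sum_{n=1}^\infty \bar\chi(n)/\bigl(n^{2m+1}(\exp(2n\beta)-1)\bigr)$, matching the displayed right-hand side of \eqref{character_Katayama}. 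For the Bernoulli double sum, with $N=1$ we get $\alpha^{2j/(N+1)} = \alpha^j$ and $\beta^{N + 2N^2(m-j)/(N+1)} = \beta^{1 + (m-j)} = \beta^{m-j+1}$, the exponent of $q$ stays $2j-1$, the argument of $B_{2N(m-j)+N+1}$ becomes $2(m-j)+2 = 2m-2j+2$, and the upper summation limit $\lfloor m + (N+1)/(2N)\rfloor = \lfloor m+1\rfloor = m+1$. On the left, $(2n)^N$ becomes $2n$ throughout, yielding exactly the left-hand side of \eqref{character_Katayama}. Finally, since $q>1$ the correction term $\mathcal{R}$ vanishes and there is no $\zeta(2Nm+1)L(0,\chi)$ term to worry about separately: one checks that when $q>1$ and $\chi$ is a primitive character, $L(0,\chi)=0$ if $\chi$ is even (indeed $L(0,\chi) = -\tfrac1q\sum_{r=1}^q \chi(r) r$, which vanishes for even $\chi$ with $q>1$ by pairing $r \leftrightarrow q-r$), so the term $\zeta(2Nm+1)L(0,\chi)$ already absent from the left side of the corollary is consistent, and nothing extra survives.

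The only genuine check, rather than obstacle, is verifying that the $L(0,\chi)$ contribution and the $\mathcal{R}_1$-type terms from Theorem~\ref{Main theorem} have been correctly absorbed into Theorem~\ref{character analogue_GJKM} already, so that setting $q>1$ in \eqref{Main expression for N greater than 1} directly produces \eqref{character_Katayama} with no leftover constants; I would confirm this by comparing the $q>1$ branch of $\mathcal{R}$ (which is $0$) and noting that the bracketed quantity on the left of \eqref{Main expression for N greater than 1} reduces, for $q>1$, to just the double Lambert-type sum because $L(0,\chi)=0$. Thus the corollary follows by pure substitution, and the ``hard part'' is merely bookkeeping of the collapsed indices and signs rather than any new analytic input.
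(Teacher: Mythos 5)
Your proposal is correct and follows exactly the paper's route: the paper also obtains the corollary by substituting $N=1$, $q>1$ into Theorem \ref{character analogue_GJKM} and invoking Lemma \ref{L(0,chi)} to kill the $\zeta(2Nm+1)L(0,\chi)$ term (your pairing argument $r\leftrightarrow q-r$ is a valid, if slightly more explicit, justification of that lemma's statement for even $\chi$ with $q>1$). All of your index and sign bookkeeping checks out.
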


\subsection{A new identity for $L(1/3,  \chi)$.}
As an application of our main Theorem \ref{Main theorem}, 	we obtain an interesting identity for $L(1/3,  \chi)$. 
	
\begin{corollary}\label{L(1/3,chi)}
Let $\chi_5$ be the even primitive character modulo $5$.  Let $\alpha$ and $\beta$ be positive real numbers such that $\alpha^3 \beta = 2 \pi^4$.  Then we have
\begin{align*}
	\sum_{r=1}^{5} \sum_{n=1}^{\infty} \chi_5(r) \frac{n \exp\left(-\frac{r}{5} (n\alpha)^3 \right)}{1- \exp\left(-(n\alpha)^3 \right)}&=\frac{\mathcal{G}(\chi_5)}{3}  \left(\frac{2\pi}{\alpha^3}\right)^{\frac{2}{3}}\Bigg\{ L\left(\frac{1}{3},\chi_5\right) \\
	&+ \sum_{m=1}^{\infty} \frac{{\chi_5}(m)}{m^\frac{1}{3}}
 \left(\frac{e^{- \sqrt[3]{m \beta}}}{2\sinh(\sqrt[3]{m \beta})} + \frac{\cos\left(\sqrt{3} \sqrt[3]{m \beta} +\frac{\pi}{3}\right)-\frac{e^{-\sqrt[3]{m \beta}}}{2}}{\cosh(\sqrt[3]{m \beta})-\cos\left(\sqrt{3} \sqrt[3]{m \beta}\right)}\right)\Bigg\}. 
\end{align*}

\end{corollary}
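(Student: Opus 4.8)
The plan is to specialize Theorem~\ref{Main theorem} with $N=3$, $h=1$, $q=5$, $\chi=\chi_5$ and $x=\alpha^3$. With these choices $n^{N}x=(n\alpha)^3$ and $n^{N-2h}=n$, so the left-hand side $F(2h-N,x,\chi)=F(-1,\alpha^3,\chi_5)$ in \eqref{F-function} is exactly the Lambert series on the left of the corollary; note also $N-2h=1\neq-1$, as required. First I would observe that almost every term on the right of \eqref{Main expression} vanishes: $\zeta(-N+2h)L(0,\chi)=\zeta(-1)L(0,\chi_5)=0$ because $L(0,\chi_5)=0$ for the nontrivial even character $\chi_5$; $\mathcal{R}_1(x)=0$ since $q=5>1$; and the finite Bernoulli sum runs only over $j=1$ (because $2\lfloor h/N\rfloor+1=1$) with coefficient $\zeta(-Nj-N+2h)=\zeta(-4)=0$. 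Hence
\begin{equation*}
F(-1,\alpha^3,\chi_5)=\frac{1}{3}\Gamma\left(\frac{2}{3}\right)L\left(\frac{2}{3},\chi_5\right)\left(\frac{\alpha^3}{5}\right)^{-2/3}+\mathcal{J}_{\chi_5}(\alpha^3).
\end{equation*}

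Second, I would convert the $\Gamma$-factor term into the $L(1/3,\chi_5)$ term appearing in the corollary. The functional equation for the primitive even character $\chi_5$, together with the reflection and duplication formulas for $\Gamma$, gives $\Gamma(2/3)L(2/3,\chi_5)=(2\pi)^{2/3}\,5^{-2/3}\,\mathcal{G}(\chi_5)\,L(1/3,\bar\chi_5)$; since $\chi_5$ is real, $\bar\chi_5=\chi_5$. After substituting and cancelling the power of $5$, this term becomes exactly $\frac{\mathcal{G}(\chi_5)}{3}\left(\frac{2\pi}{\alpha^3}\right)^{2/3}L(1/3,\chi_5)$, which matches the outer factor $\frac{\mathcal{G}(\chi_5)}{3}\left(\frac{2\pi}{\alpha^3}\right)^{2/3}$ sitting in $\mathcal{J}_{\chi_5}(\alpha^3)$ (here $(-1)^{h+1}=1$, and $v_{3,\chi_5}(j)=1$ since $\chi_5$ is even).

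Third, I would unwind $\mathcal{J}_{\chi_5}(\alpha^3)$. For $N=3$ the double-primed sum runs over $j\in\{-2,0,2\}$, and using $\alpha^3\beta=2\pi^4$ one checks that $2\pi\left(m\cdot\frac{2\pi}{\alpha^3}\right)^{1/3}=\frac{(2\pi)^{4/3}}{\alpha}=2\sqrt[3]{m\beta}$, so the exponential inside $G_j\left(\frac{1}{3},\frac{2\pi}{\alpha^3},\chi_5\right)$ has argument $2\sqrt[3]{m\beta}\,e^{-i\pi j/6}$. Writing $u=\sqrt[3]{m\beta}$, the $j=0$ summand equals $\frac{1}{e^{2u}-1}=\frac{e^{-u}}{2\sinh u}$; and since $2e^{\mp i\pi/3}=1\mp i\sqrt{3}$, the $j=\pm2$ terms pair up, and a short computation over the common denominator $(e^{(1-i\sqrt{3})u}-1)(e^{(1+i\sqrt{3})u}-1)=e^{2u}-2e^{u}\cos(\sqrt{3}\,u)+1$, whose paired numerator is $2e^{u}\cos\left(\sqrt{3}\,u+\frac{\pi}{3}\right)-1$, yields $\frac{\cos(\sqrt{3}\,u+\pi/3)-\frac{1}{2}e^{-u}}{\cosh u-\cos(\sqrt{3}\,u)}$. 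Summing the three contributions reproduces the bracketed expression in the statement, which finishes the proof.

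The step I expect to be the main obstacle is the bookkeeping in the second paragraph: one must push the functional equation of $L(s,\chi_5)$ and the $\Gamma$-identities through carefully so that the emerging powers of $2\pi$, of $q=5$, and the Gauss sum line up precisely with the prefactor already present in $\mathcal{J}_{\chi_5}$. It is exactly the reality of $\chi_5$ (so that $\bar\chi_5=\chi_5$ and $\mathcal{G}(\bar\chi_5)=\mathcal{G}(\chi_5)$) that lets the $\Gamma$-factor term and $\mathcal{J}_{\chi_5}$ merge into one clean bracket. The trigonometric rearrangement of the $j=\pm2$ terms is routine but needs some care with the phases $e^{\pm i\pi/3}$.
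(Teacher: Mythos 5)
Your proposal is correct and follows essentially the same route as the paper: specialize Theorem \ref{Main theorem} at $N=3$, $h=1$, $q=5$, kill the vanishing terms, convert $\Gamma(2/3)L(2/3,\chi_5)$ via the functional equation and Euler's reflection formula (no duplication formula is actually needed), and pair the $j=\pm2$ terms trigonometrically — your direct common-denominator computation reproduces exactly what the paper gets from Lemma \ref{dixit_maji_lemma 1}. The only cosmetic difference is that you substitute $x=\alpha^3$ at the outset while the paper does so at the very end.
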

The above formula for $L(1/3, \chi_5)$ is analogous to  Ramanujan's famous formula \cite[p.~332]{lnb}  for $\zeta(1/2)$. 
{\it 
Let $\alpha$ and $\beta$ be two positive numbers such that $\alpha \beta=4\pi^{3}$.Then
\begin{align}\label{Ramanujan_zeta(1/2)}
\sum_{n=1}^{\infty}\frac{1}{e^{n^{2}\alpha}-1}=\frac{1}{4}+\frac{\pi^2}{6 \alpha}+\frac{\sqrt{\beta}}{4\pi}\zeta\left(\frac{1}{2}\right)+\frac{\sqrt{\beta}}{4\pi}\sum_{m=1}^{\infty}   \frac{\cos(\sqrt{m\beta})-\sin(\sqrt{m\beta})-e^{-\sqrt{m\beta}}} { \sqrt{m} ( \cosh(\sqrt{m\beta})-\cos(\sqrt{m\beta}))}.
\end{align}}
	
Now we recall that our Theorem \ref{Main theorem} is valid only for those $N$ and $h$ such that $N-2h \neq -1$.  Thus,  we obtain a separate identity for $N-2h = -1$.


\begin{theorem} \label{N-2h=-1 case}
	Let $\chi$ be an even primitive Dirichlet character modulo $q$, $q\geq1$ and $x>0$ be any real number. Then for any odd positive integer $N$,  we have
	\begin{align}
		\sum_{r=1}^{q}  \sum_{n=1}^{\infty}\frac{\chi(r)}{n}\frac{\exp{(-\frac{r}{q}n^N x)}}{1-\exp({-n^N x})} = \mathcal{R}_0 + \mathcal{R}_1 +   \mathcal{R}(N) + \mathcal{K}_{\chi,N}(x),
	\end{align}
	where 
	{\allowdisplaybreaks
	\begin{equation}\label{Residue at 0}
		\mathcal{R}_0= 
		\begin{cases}
			\frac{1}{2N} \left(\gamma(1-N) -\log (2\pi) +\log(x)\right),  & \text{if} \ \chi \ \text{is even and} \  q=1,\\
			-\frac{1}{2N} \sum_{r=1}^{q-1} \chi(r) \log\left(
			\sin\left(\frac{r\pi}{q}\right)\right), & \text{if} \ \chi \ \text{is even and}  \ q>1,\\
		\end{cases} \\
	\end{equation}}

\begin{equation} 
	\mathcal{R}_1 = \begin{cases}
		\frac{\zeta(N+1)}{x}, & \text{if} \ q=1, \\
		0, & \text{if} \ q>1, \\
	\end{cases}
\end{equation}
\begin{equation} 
	\mathcal{R}(N) = 
	\begin{cases}
		\frac{x}{2q} L(-1,\chi), & \text{if} \ N=1, \\
		0, & \text{if} \ N>1, 
	\end{cases}
\end{equation}
and
\begin{align*} 
	\mathcal{K}_{\chi,N}(x) = (-1)^{\frac{N+3}{2}} \frac{\mathcal{G}(\chi)}{N} \sum_{n=1}^{\infty} \frac{\bar{\chi}(n)}{n}  \sideset{}{''}\sum_{j=-(N-1)}^{N-1}   \frac{\exp\left(\frac{i \pi j}{2}\right)}{\exp\left(2 \pi \left(\frac{2\pi n}{x}\right)^{(1/N)} \exp\left(\frac{-i \pi j}{2N}\right)\right)-1}. 
\end{align*}
\end{theorem}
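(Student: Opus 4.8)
The plan is to run the same Mellin–Barnes contour-integration machinery that underlies Theorem \ref{Main theorem}, but now specialised to the degenerate parameter choice $2h-N=1$, where the integrand acquires a double pole and hence a logarithmic term. Concretely, start from the integral representation \eqref{Kanemitsu_Dirichlet L-function}: for $c_0$ large,
\begin{equation*}
\sum_{r=1}^{q}\sum_{n=1}^{\infty}\frac{\chi(r)}{n}\frac{\exp(-\tfrac{r}{q}n^N x)}{1-\exp(-n^N x)}
=\frac{1}{2\pi i}\int_{c_0-i\infty}^{c_0+i\infty}\Gamma(s)\,L(s,\chi)\,\zeta(Ns-N+2h)\Bigl(\frac{x}{q}\Bigr)^{-s}\mathrm{d}s,
\end{equation*}
where now $Ns-N+2h=Ns-1$, so the second zeta factor is $\zeta(Ns-1)$, with its pole at $s=2/N$. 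First I would shift the line of integration from $\Re(s)=c_0$ to $\Re(s)=-c_1$ for some large $c_1>0$ (of the same shape as in the proof of Theorem \ref{Main theorem}), picking up residues at the finitely many poles crossed. The residue bookkeeping is where the four terms $\mathcal R_0,\mathcal R_1,\mathcal R(N),\mathcal K_{\chi,N}(x)$ come from, and this is the one genuinely new computation relative to Theorem \ref{Main theorem}.

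The pole structure to analyse: (i) the pole of $\zeta(Ns-1)$ at $s=2/N$ is simple and contributes $\mathcal R_1$ — here the constraint that $\chi$ be even primitive makes $L(2/N,\chi)$ and $\Gamma(2/N)$ behave as expected, and when $q=1$ this collapses to $\zeta(N+1)/x$ after using $\alpha\beta$-type homogeneity; (ii) the trivial zeros / the pole of $\Gamma(s)$ at $s=0$ now \emph{collides} with a zero or special value coming from $\zeta(-1)$ and $L(0,\chi)$ — precisely, at $s=0$ one has $\Gamma(s)$ with a simple pole, and the factor $L(s,\chi)\zeta(Ns-1)$ must be Taylor-expanded to first order, producing the logarithmic term $\log(x)$ together with $\gamma$ and $\log(2\pi)$ via $\Gamma'(1)=-\gamma$ and the Laurent expansion of $\zeta$ near its pole; this is the origin of $\mathcal R_0$, and the two cases ($q=1$ even, $q>1$ even) reflect whether $L(s,\chi)$ itself vanishes or not at $s=0$ (for $q>1$ primitive even, $L(0,\chi)=0$, so one differentiates $L$ and invokes the classical evaluation $L'(0,\chi)=-\sum_{r=1}^{q-1}\chi(r)\log\sin(r\pi/q)$ up to the standard constant, which matches the stated $\mathcal R_0$); (iii) when $N=1$ there is an extra pole at $s=-1$ coming from $\Gamma(s)$ (residue $-1$) meeting $\zeta(Ns-1)=\zeta(s-1)$ at a nonspecial point and $L(-1,\chi)$, giving $\mathcal R(N)$; for $N>1$ this pole is not crossed (or is cancelled by a trivial zero), hence $\mathcal R(N)=0$.

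The remaining integral on $\Re(s)=-c_1$ is then transformed into the series $\mathcal K_{\chi,N}(x)$ exactly as in the proof of Theorem \ref{Main theorem}: apply the functional equations of $L(s,\chi)$ (bringing in the Gauss sum $\mathcal G(\chi)$ and $\bar\chi$) and of $\zeta(Ns-1)$, use the reflection/duplication formulae for $\Gamma$ to simplify the resulting Gamma quotients, substitute $s\mapsto$ the dual variable, and re-expand the reciprocal Gamma/zeta factors as a geometric-type series in $\exp(-2\pi(2\pi n/x)^{1/N}e^{-i\pi j/(2N)})$ over the residue classes $j=-(N-1),-(N-3),\dots,N-1$; the sign $(-1)^{(N+3)/2}$ and the weights $v_{N,\chi}(j)$ (here $v_{N,\chi}(j)=1$ since $\chi$ is even) come out of that computation just as they did for \eqref{J}. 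Since $N$ is odd the "$\chi$ odd" branch of \eqref{vv} is irrelevant, and the phase $\exp(i\pi j(2h-1)/(2N))$ of \eqref{J} becomes $\exp(i\pi j/2)$ upon setting $2h-1=N$. The main obstacle is item (ii): handling the double pole at $s=0$ carefully enough to extract the correct logarithmic and constant terms in both the $q=1$ and $q>1$ cases, since this is the only place where the argument genuinely departs from the template of Theorem \ref{Main theorem} and where a sign or a stray $\log(2\pi)$ is easy to mislay. Everything else is a specialisation of machinery already developed.
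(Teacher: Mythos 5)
Your overall architecture is the same as the paper's: take the Mellin--Barnes representation from Lemma \ref{integral lemma}, shift the contour left, collect residues, and convert the shifted integral into $\mathcal{K}_{\chi,N}(x)$ by specialising the functional-equation computation of Theorem \ref{Main theorem} at $2h-1=N$ (that last step you describe correctly, including the phase $e^{i\pi j/2}$ and the sign $(-1)^{(N+3)/2}$). However, the residue bookkeeping --- which you yourself identify as the only genuinely new computation --- contains a sign error that invalidates the pole analysis. With $N-2h=-1$ the zeta factor is $\zeta(Ns-(N-2h))=\zeta(Ns+1)$, not $\zeta(Ns-1)$ as you write. Consequently: (i) there is no pole at $s=2/N$; the term $\mathcal{R}_1$ actually comes from the pole of $L(s,\chi)$ at $s=1$ in the case $q=1$ (where $L(s,\chi)=\zeta(s)$), giving residue $\Gamma(1)\zeta(N+1)x^{-1}$, and this is precisely why $\mathcal{R}_1=0$ for $q>1$ --- your proposed source at $s=2/N$ would contribute $\tfrac1N\Gamma(2/N)L(2/N,\chi)(x/q)^{-2/N}$ for every $q$, which is neither of the stated values; (ii) the double pole at $s=0$ that produces the logarithm exists only because the pole of $\zeta(Ns+1)$ sits at $s=0$ on top of the pole of $\Gamma(s)$; with $\zeta(Ns-1)$ the point $s=0$ is a simple pole with residue a constant and no $\log x$ ever appears, so your invocation of ``the Laurent expansion of $\zeta$ near its pole'' at $s=0$ is inconsistent with your own integrand; (iii) at $s=-1$ your version gives $\zeta(-N-1)=\zeta(-2)=0$ when $N=1$, which would force $\mathcal{R}(1)=0$ rather than $\tfrac{x}{2q}L(-1,\chi)$, whereas the correct factor $\zeta(1-N)=\zeta(0)=-\tfrac12$ produces the stated term (and the trivial zero $\zeta(1-N)=0$ for odd $N>1$ kills it, as you guessed).

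To repair the argument you need to (a) fix the argument of the second zeta factor to $Ns+1$, (b) add the pole at $s=1$ to your list and derive $\mathcal{R}_1$ from it, and (c) redo the $s=0$ computation as in the paper: for $q=1$ it is a genuine double pole of $s^{-2}$ type coming from $\Gamma(s)\zeta(Ns+1)$ and one differentiates the product of Laurent expansions to extract $\tfrac1{2N}(\log x-\log 2\pi-\gamma(N-1))$; for $q>1$ even primitive, $L(0,\chi)=0$ reduces it to a simple pole with residue $L'(0,\chi)/N$, which Lemma \ref{L'(0,chi)} evaluates to the stated sine--log sum. One further small point: you propose shifting to $\Re(s)=-c_1$ with $c_1$ arbitrarily large; this is workable but then you must also note that the residues at $s=-j$ for $j$ beyond the range retained in the theorem vanish because of the trivial zeros of $\zeta(Ns+1)$ (and of $B_{j+1,\chi}$ for even $\chi$), whereas the paper avoids this by choosing the line $\Re(s)=d_0$ just to the left of the last surviving pole.
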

In particular letting $q=1$ in the above theorem,  we obtain the below identity. 
\begin{corollary}\label{for q=1}
For any odd positive integer $N>1$,  one has
\begin{align*}
\sum_{n=1}^{\infty} \frac{1}{n\left( \exp(n^N x) -1 \right)} & =  \frac{1}{2N} \left(\gamma(1-N) +\log\left(\frac{x}{2\pi} \right) \right)  +  \frac{\zeta(N+1)}{x} \\
& + \frac{(-1)^{\frac{N+3}{2}}  }{N} \sideset{}{''}\sum_{j=-(N-1)}^{N-1}  \sum_{n=1}^{\infty} \frac{1}{n} \frac{\exp\left(\frac{i \pi j}{2}\right)}{\exp\left(2 \pi \left(\frac{2\pi n}{x}\right)^{(1/N)} \exp\left(\frac{-i \pi j}{2N}\right)\right)-1}. 
\end{align*}
\end{corollary}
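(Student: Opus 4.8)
The plan is to specialize Theorem \ref{N-2h=-1 case} to the trivial character $\chi$ modulo $q=1$, and then simplify each of the four pieces $\mathcal{R}_0, \mathcal{R}_1, \mathcal{R}(N), \mathcal{K}_{\chi,N}(x)$ in turn. Since we are told $N>1$ is an odd positive integer, the term $\mathcal{R}(N)$ vanishes identically (it is nonzero only when $N=1$), which removes one piece immediately. For the $q=1$ branch, $\mathcal{R}_1$ is exactly $\zeta(N+1)/x$, matching the corresponding term in the statement with no work required.

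The next step is to unwind $\mathcal{R}_0$. In the $q=1$, even-$\chi$ branch we have $\mathcal{R}_0 = \frac{1}{2N}\left(\gamma(1-N) - \log(2\pi) + \log x\right)$, and combining the two logarithms gives $\frac{1}{2N}\left(\gamma(1-N) + \log\left(\frac{x}{2\pi}\right)\right)$, which is precisely the first term on the right-hand side of the corollary. (One should double-check that the trivial character modulo $1$ is even in the convention of the paper, so that we are indeed in this branch; this is immediate since $\chi(-1)=\chi(1)=1$.) Then I would turn to $\mathcal{K}_{\chi,N}(x)$: with $q=1$ one has $\chi(n)=\bar\chi(n)=1$ for all $n$, and the Gauss sum $\mathcal{G}(\chi)=1$, so
$$
\mathcal{K}_{\chi,N}(x) = \frac{(-1)^{\frac{N+3}{2}}}{N}\sum_{n=1}^{\infty}\frac{1}{n}\sideset{}{''}\sum_{j=-(N-1)}^{N-1}\frac{\exp\left(\frac{i\pi j}{2}\right)}{\exp\left(2\pi\left(\frac{2\pi n}{x}\right)^{1/N}\exp\left(\frac{-i\pi j}{2N}\right)\right)-1},
$$
and after interchanging the two (absolutely convergent) summations this is exactly the stated double sum. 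Assembling $\mathcal{R}_0 + \mathcal{R}_1 + 0 + \mathcal{K}_{\chi,N}(x)$ yields the claimed formula.

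There is really no hard analytic obstacle here, since all the heavy lifting (the contour shift, residue computations, and convergence justifications) is already done inside Theorem \ref{N-2h=-1 case}. The only point requiring a little care is bookkeeping: verifying that the trivial character modulo $1$ genuinely falls under the ``$\chi$ even and $q=1$'' case of \eqref{Residue at 0} (and not some degenerate situation), confirming that $\mathcal{G}(\chi)=1$ in that case, and checking that the primitivity hypothesis in Theorem \ref{N-2h=-1 case} is satisfied by the trivial character modulo $1$ (it is, vacuously). Once these conventions are pinned down, the corollary follows by direct substitution and cosmetic rearrangement.
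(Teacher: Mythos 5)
Your proposal is correct and follows exactly the paper's route: the paper's own proof is a one-line substitution of $q=1$ into Theorem \ref{N-2h=-1 case}, and your verification of each piece ($\mathcal{R}(N)=0$ for $N>1$, $\mathcal{G}(\chi)=1$, the simplification of $\mathcal{R}_0$, and the harmless interchange of the $j$- and $n$-sums in $\mathcal{K}_{\chi,N}(x)$) is just the bookkeeping the authors leave implicit.
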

Under suitable substitution,  we can show that the above identity is same as the identity obtained in \cite[Theorem 1.3]{DM20}.  Furthermore,  letting $q=N=1$ in Theorem \ref{N-2h=-1 case},  we obtain the following identity. 
\begin{corollary}\label{for N=q=1}
For any positive real $x$,  we have
\begin{align}
\sum_{n=1}^\infty \frac{1}{n\left(\exp(n x) -1  \right)} = \frac{1}{2} \log\left( \frac{x}{2\pi} \right) + \frac{\pi^2}{6 x} - \frac{x}{24} +  \sum_{n=1}^\infty \frac{1}{n\left(\exp\left(4 \pi^2 \frac{n}{ x} \right) -1  \right)}. 
\end{align}
\end{corollary}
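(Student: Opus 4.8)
The plan is to derive Corollary \ref{for N=q=1} as the specialization $N=q=1$ of Theorem \ref{N-2h=-1 case}, so essentially all the work is bookkeeping: one takes the general identity and substitutes the relevant small values, then identifies each of the four pieces $\mathcal{R}_0,\mathcal{R}_1,\mathcal{R}(N),\mathcal{K}_{\chi,N}(x)$ explicitly. First I would note that when $q=1$ the character $\chi$ is the trivial character, which is even, so we are in the first branch of \eqref{Residue at 0} with $N=1$; since $\gamma(1-N)=\gamma(0)=\gamma$ is Euler's constant (using the Laurent expansion $\zeta(s)=\frac{1}{s-1}+\gamma+\cdots$, so that the relevant constant $\gamma(1-N)$ at $N=1$ is just $\gamma$), we get $\mathcal{R}_0=\tfrac12\bigl(\gamma-\log(2\pi)+\log x\bigr)=\tfrac12\log\!\left(\frac{x}{2\pi}\right)+\tfrac{\gamma}{2}$. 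However, comparing with the claimed statement, the $\tfrac{\gamma}{2}$ term is absent, so I expect that in fact $\gamma(1-N)$ denotes the generalized Stieltjes/Euler constant $\gamma_0(1-N)$ appearing in the Laurent expansion of $\zeta(s)$ at $s=1-N$ rather than at $s=1$; at $N=1$ that is $\zeta$ evaluated regularly, and the correct reading must make the constant term vanish. The cleanest route is: at $N=1$ the Laurent data degenerates and one checks directly that $\mathcal{R}_0=\tfrac12\log\!\left(\frac{x}{2\pi}\right)$, which I would verify by tracking the residue computation in the proof of Theorem \ref{N-2h=-1 case} at the relevant pole.

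Next I would handle the remaining three terms, which are immediate. Since $q=1$ we are in the first branch of $\mathcal{R}_1$, giving $\mathcal{R}_1=\frac{\zeta(N+1)}{x}=\frac{\zeta(2)}{x}=\frac{\pi^2}{6x}$ using Euler's formula \eqref{zeta(2m)}. Since $N=1$ we are in the first branch of $\mathcal{R}(N)$, giving $\mathcal{R}(1)=\frac{x}{2q}L(-1,\chi)=\frac{x}{2}\zeta(-1)=\frac{x}{2}\cdot\left(-\frac{1}{12}\right)=-\frac{x}{24}$, again via \eqref{zeta(2m)} applied through the functional equation, or just the standard value $\zeta(-1)=-\tfrac{1}{12}$. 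Finally, for $\mathcal{K}_{\chi,N}(x)$ with $N=1$ the double-primed sum over $j$ from $-(N-1)$ to $N-1$ collapses to the single value $j=0$, so $\exp(i\pi j/2)=1$ and $\exp(-i\pi j/(2N))=1$; also $\bar\chi(n)=1$ and $(-1)^{\frac{N+3}{2}}=(-1)^2=1$ and $\mathcal{G}(\chi)=1$ for the trivial character modulo $1$. Hence $\mathcal{K}_{\chi,1}(x)=\sum_{n=1}^{\infty}\frac{1}{n}\cdot\frac{1}{\exp\!\left(2\pi\cdot\frac{2\pi n}{x}\right)-1}=\sum_{n=1}^{\infty}\frac{1}{n\left(\exp\!\left(4\pi^2\frac{n}{x}\right)-1\right)}$, which is exactly the series on the right-hand side of the claimed identity.

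Assembling these, the left-hand side $\sum_{r=1}^{q}\sum_{n=1}^{\infty}\frac{\chi(r)}{n}\frac{\exp(-\frac{r}{q}n^Nx)}{1-\exp(-n^Nx)}$ at $N=q=1$ becomes $\sum_{n=1}^{\infty}\frac{1}{n}\frac{\exp(-nx)}{1-\exp(-nx)}=\sum_{n=1}^{\infty}\frac{1}{n(\exp(nx)-1)}$, and the right-hand side is $\tfrac12\log\!\left(\frac{x}{2\pi}\right)+\frac{\pi^2}{6x}-\frac{x}{24}+\sum_{n=1}^{\infty}\frac{1}{n(\exp(4\pi^2 n/x)-1)}$, which is precisely \eqref{for N=q=1} (displayed in the corollary). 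The only genuine subtlety — the ``hard part,'' such as it is — is pinning down the constant term $\mathcal{R}_0$ when $N=1$: one must confirm that the degenerate case of the constant $\gamma(1-N)$ in \eqref{Residue at 0} indeed produces $\tfrac12\log\!\left(\frac{x}{2\pi}\right)$ with no leftover Euler-constant contribution, which amounts to re-examining the residue at the double pole (from $\Gamma(s)$ and $\zeta(s)$ colliding at $s=0$, together with $\zeta(Ns-N+2h)$ at $N=1$) in the proof of Theorem \ref{N-2h=-1 case}; everything else is a direct substitution using the classical values $\zeta(2)=\pi^2/6$ and $\zeta(-1)=-1/12$.
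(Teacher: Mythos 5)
Your proof is correct and matches the paper's (one-line) proof: substitute $N=q=1$ into Theorem \ref{N-2h=-1 case} and use $\zeta(2)=\pi^2/6$ and $\zeta(-1)=-1/12$. The only point you agonized over is a non-issue: in \eqref{Residue at 0} the expression $\gamma(1-N)$ is the \emph{product} of Euler's constant with $(1-N)$ (compare the line $R_0(x)=\frac{1}{2N}\left(\log(x)-\log(2\pi)-\gamma(N-1)\right)$ obtained in Case 1 of the paper's proof of Theorem \ref{N-2h=-1 case}), so at $N=1$ it vanishes identically and $\mathcal{R}_0=\tfrac12\log\left(\frac{x}{2\pi}\right)$ with no leftover Euler-constant term and no need to re-examine the double-pole residue.
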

Upon changing the variables $x= 2\alpha$ and $\alpha \beta =\pi^2$,  one can easily show that the above identity is exactly same as the following Ramanujan's identity \cite[Ch.~14,  Sec. ~8, Cor. (ii)]{Notebook volume 2}:
\begin{align}
\sum_{n=1}^\infty \frac{1}{n\left(\exp(2 n \alpha ) -1  \right)} - \sum_{n=1}^\infty  \frac{1}{n\left(\exp(2 n \beta ) -1  \right)}  = \frac{\beta -\alpha}{12} + \frac{1}{4} \log\left( \frac{\alpha}{\beta}  \right).
\end{align}
Interestingly,  this identity is equivalent to the transformation formula $\eta\left( - \frac{1}{z}\right) = \sqrt{-iz}\eta(z)$ of the Dedekind eta function $\eta(z)$.

	\section{Some well-known results}
	In this section,  we jot down a few well-known results which will be useful throughout the article. 
For $\Re(s)>0$,  the classical Gamma function $\Gamma(s)$ is defined by
\begin{equation*}
	\Gamma(s) := \int_{0}^{\infty} x^{s-1} \exp(-x) \mathrm{d}x.
	\end{equation*}
%
For $\Re(z) >0$ and $c>0$, we have
\begin{equation} 
\exp(-z) = \frac{1}{2\pi i} \int_{c-i \infty}^{c+i\infty} \Gamma(s) z^{-s}\mathrm{d}s  .
	\label{Inverse Mellin}
\end{equation}
Riemann established the analytic continuation of $\zeta(s)$  in $\mathbb{C}$ except at $s=1$, and it satisfies the following functional equation:
	\begin{equation}\label{asymetric functional equation for zeta}
		\zeta(s)= 2^s \pi^{s-1}\Gamma(1-s) \sin\left(\frac{\pi s }{2}\right)\zeta(1-s).
	\end{equation} 
We know that $\Gamma(s)$ has simple poles at non-positive integers with residue  at $-m$ is given by
\begin{equation} \label{Res_Gamaa}
\textrm{Res}_{s=-m} \Gamma(s) = \frac{(-1)^m}{m!}.
\end{equation}
Moreover,  $\Gamma(s)$ satisfies the following asymptotic expansion at $s=0$, 
	\begin{equation}	\label{laurent of Gamma}
		\Gamma(s) = \frac{1}{s} -\gamma + \frac{1}{2} \left(\gamma^2 + \frac{\pi^2}{6}\right)s - \frac{1}{6}\left(\gamma^3 +\frac{\gamma \pi^2}{2} + 2\zeta(3)\right)s^2 +O(s^3).
	\end{equation}
	For $s\in \mathbb{C}\backslash\mathbb{Z},$ $\Gamma(s)$ satisfies
		\begin{equation} \label{Euler's reflection}
	\Gamma(s)	\Gamma(1-s)	 = \frac{\pi}{\sin(\pi s) } .
	\end{equation}
Let $s = \sigma +iR$ and $p \leq \sigma \leq q$.  Stirling's bound for $\Gamma(s)$ \cite[p.~151]{IK} is given by
	\begin{equation}\label{stirling equn}
		\Gamma (\sigma + i R) \ll  | R|^{\sigma-\frac{1}{2}} \exp\left({- \frac{ \pi |R|}{2}}\right),
	\end{equation}
	as $|R|\rightarrow \infty$. 
The Gauss sum corresponding to a Dirichlet character $\chi$ modulo $q$ is defined as 
	\begin{equation} \label{Gauss sum}
		\mathcal{G}(\chi) := \sum_{r=1}^{q} \chi(r) e^{2\pi i r/q}.
	\end{equation}
Let us define
\begin{equation} \label{a}
	a:= \frac{1-\chi(-1)}{2} =\begin{cases}
		0, & \text { if } \ \chi \hspace{0.2cm} \text{is even}, \\
		1, &  \text { if} \ \ \chi \hspace{0.2cm}\text{is odd}. \\
	\end{cases}
\end{equation}
Now,  we assume that $\chi$ is any primitive  character modulo $q$. Then for every $s\in\mathbb{C}$, 
	\begin{equation} \label{Dirichlet L-function}
		L(s,\chi) = \varepsilon_\chi 2^s \pi^{s-1} q^{\frac{1}{2} -s}\sin\left(\frac{\pi}{2}(s+a)\right)\Gamma(1-s) L(1-s,\bar{\chi}),
	\end{equation}
	where $ \varepsilon_\chi = \frac{\mathcal{G}(\chi)}{i^a\sqrt{q}}$
	is an algebraic number of absolute value $1$.

	\begin{lemma}
		For any $\ell \in \mathbb{N}\cup \{0\}$, 
		$$\zeta(-\ell) = (-1)^\ell\frac{B_{\ell+1} }{\ell+1}.$$
		This implies that $\zeta(s)$ vanishes at $-2\ell$ as we know $B_{2\ell +1} =0$, for all  $\ell \in \mathbb{N}$. These zeros are known as  trivial zeros of $\zeta(s)$.
	\end{lemma}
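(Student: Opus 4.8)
The plan is to read off $\zeta(-\ell)$ directly from the functional equation \eqref{asymetric functional equation for zeta} evaluated at $s=-\ell$, feeding in Euler's formula \eqref{zeta(2m)} for the value of $\zeta$ that appears on the right-hand side. I would organize the argument by the parity of $\ell$, isolating $\ell=0$ as a degenerate case. First I would set $s=-\ell$ for $\ell\geq 1$ in \eqref{asymetric functional equation for zeta}, obtaining
$$\zeta(-\ell) = 2^{-\ell}\pi^{-\ell-1}\,\Gamma(1+\ell)\,\sin\!\left(-\frac{\pi\ell}{2}\right)\zeta(1+\ell),$$
where $\Gamma(1+\ell)=\ell!$ and $\zeta(1+\ell)$ is finite because $1+\ell\geq 2$.

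When $\ell$ is even with $\ell\geq 2$, the factor $\sin(-\pi\ell/2)=0$ forces $\zeta(-\ell)=0$; on the other side of the asserted identity, $\ell+1$ is odd and $\geq 3$, so $B_{\ell+1}=0$ and the claim reduces to $0=0$. This simultaneously establishes the trivial zeros $\zeta(-2\ell)=0$ for $\ell\geq 1$ stated in the second sentence of the lemma. For odd $\ell$ I would write $\ell=2m-1$ with $m\geq 1$, so that $\sin(-\pi\ell/2)=\cos(\pi m)=(-1)^m$. Substituting the value $\zeta(2m)=(-1)^{m+1}\frac{2^{2m}B_{2m}}{2(2m)!}\pi^{2m}$ supplied by \eqref{zeta(2m)}, the powers of $\pi$ cancel, the powers of $2$ collapse to a single factor, and $(2m-1)!/(2m)!=1/(2m)$; after collecting the signs one finds $\zeta(1-2m)=-B_{2m}/(2m)$, which is precisely $(-1)^{2m-1}B_{2m}/(2m)$, the claimed value at $\ell=2m-1$.

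The case $\ell=0$ is the one genuinely delicate point, since at $s=0$ the functional equation produces the indeterminate product $\sin(\pi s/2)\,\zeta(1-s)$ of a vanishing factor against the simple pole of $\zeta$ at $1$. I would resolve it by a limit as $s\to 0$: writing $\zeta(1-s)=-1/s+\gamma+O(s)$ from the simple pole (residue $1$) of $\zeta$ at $1$, together with $\sin(\pi s/2)=\pi s/2+O(s^{3})$, the product tends to $-\pi/2$, whence
$$\zeta(0)=2^{0}\pi^{-1}\Gamma(1)\left(-\frac{\pi}{2}\right)=-\frac{1}{2}=B_{1},$$
using the convention $B_{1}=-1/2$ fixed by $t/(e^{t}-1)=\sum_{n\geq 0}B_{n}t^{n}/n!$. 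I expect this degenerate $\ell=0$ evaluation, along with the attendant sign and normalization bookkeeping, to be the main obstacle; every remaining case is immediate once the functional equation and Euler's formula are combined.
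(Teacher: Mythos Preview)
Your argument is correct. The functional equation \eqref{asymetric functional equation for zeta} combined with Euler's formula \eqref{zeta(2m)} indeed yields $\zeta(-\ell)=(-1)^{\ell}B_{\ell+1}/(\ell+1)$ after the parity split you describe, and your limiting argument at $\ell=0$ correctly handles the removable singularity.

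As for comparison with the paper: the paper does not prove this lemma at all. It is listed in Section~3 among ``some well-known results'' and is simply quoted without justification, in the same spirit as the other standard facts about $\Gamma(s)$, $L(s,\chi)$, and the Hurwitz zeta function collected there. Your derivation via the functional equation is the classical one and is exactly what one would supply if a proof were demanded; there is nothing to compare against.
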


The below lemma tells about trivial zeros of the Dirichlet $L$-function $L(s,  \chi)$.  
\begin{lemma}\label{Zeros_Dirichlet_L}
	Assume that $\chi$ is a primitive  character modulo $q$, with $q>1$. When $\Re(s)>1$, there are no zeros of $L(s,\chi)$ and
 for the case $\Re(s)\leq0$, there are zeros at certain negative integers.\\
	(i) If $\chi$ is an even primitive character, the only zeros of $L(s,\chi)$ are simple zeros at $0,-2,-4,-6,\cdots$\\
(ii) If $\chi$ is an odd primitive character, the only zeros of $L(s,\chi)$ are simple zeros at $-1,-3,-5,-7\cdots.$
\end{lemma}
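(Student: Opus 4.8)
The plan is to read the zeros of $L(s,\chi)$ directly off the functional equation \eqref{Dirichlet L-function}, after determining which of its factors can vanish in the half-plane $\Re(s)\le 0$. Since $\chi$ is primitive modulo $q$ with $q>1$, the character is nonprincipal, so $L(s,\chi)$ is entire; in particular \eqref{Dirichlet L-function} holds at every $s\in\mathbb{C}$, and no factor can produce a spurious pole that cancels a zero in the region of interest.

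First I would record the behaviour of each factor on the right-hand side of \eqref{Dirichlet L-function} for $\Re(s)\le 0$. The constant $\varepsilon_\chi$ has modulus $1$, hence is nonzero, and the elementary factor $2^s\pi^{s-1}q^{1/2-s}$ never vanishes. The poles of $\Gamma(1-s)$ occur only at $s=1,2,3,\dots$, all lying outside $\Re(s)\le 0$, and $\Gamma$ has no zeros, so $\Gamma(1-s)$ is finite and nonzero throughout $\Re(s)\le 0$. For the twisted factor $L(1-s,\bar{\chi})$, note that $\Re(s)\le 0$ forces $\Re(1-s)\ge 1$. For $\Re(1-s)>1$ the absolutely convergent Euler product shows $L(1-s,\bar{\chi})\ne 0$, while on the boundary line $\Re(1-s)=1$ the classical non-vanishing of a Dirichlet $L$-function on $\Re=1$ (the analogue of $\zeta(1+it)\ne 0$, together with $L(1,\bar{\chi})\ne 0$ for a nonprincipal character) guarantees $L(1-s,\bar{\chi})\ne 0$. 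Thus every factor of \eqref{Dirichlet L-function} except $\sin\left(\frac{\pi}{2}(s+a)\right)$ is analytic and nowhere-vanishing on $\Re(s)\le 0$.

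Consequently, in the half-plane $\Re(s)\le 0$ the zeros of $L(s,\chi)$ coincide exactly with the zeros of $\sin\left(\frac{\pi}{2}(s+a)\right)$, and since $\sin$ has only simple zeros and is multiplied by a nowhere-vanishing analytic factor, each such zero of $L(s,\chi)$ is simple. Solving $\sin\left(\frac{\pi}{2}(s+a)\right)=0$ gives $s=2k-a$ with $k\in\mathbb{Z}$. Intersecting this arithmetic progression with $\Re(s)\le 0$ and splitting on the parity of $\chi$ through the definition \eqref{a} finishes the argument: when $\chi$ is even we have $a=0$ and the admissible values are $s=0,-2,-4,\dots$, which is assertion (i); when $\chi$ is odd we have $a=1$ and the admissible values are $s=-1,-3,-5,\dots$, which is assertion (ii). One checks directly that $s=0$ is a genuine zero in the even case, since there $\Gamma(1)=1$ and $L(1,\bar{\chi})$ is finite and nonzero, so the vanishing of $\sin(0)$ is not compensated.

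The only genuinely nontrivial ingredient is the non-vanishing of $L(1-s,\bar{\chi})$ on the closed region $\Re(1-s)\ge 1$; off the boundary this is immediate from the Euler product, whereas on the line $\Re(1-s)=1$ it is the classical non-vanishing theorem, which I would cite rather than reprove. Everything else is a direct inspection of the factors appearing in \eqref{Dirichlet L-function}.
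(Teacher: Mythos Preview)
The paper does not supply a proof of this lemma; it is stated among the ``well-known results'' in Section~3 and used later without justification. Your argument via the functional equation \eqref{Dirichlet L-function} is the standard one and is correct: the key observation that every factor on the right-hand side except $\sin\!\left(\frac{\pi}{2}(s+a)\right)$ is analytic and nonzero for $\Re(s)\le 0$ is exactly what is needed, and your handling of the boundary line $\Re(1-s)=1$ by citing the classical non-vanishing of $L(\cdot,\bar{\chi})$ on $\Re=1$ is appropriate. One small remark: the statement also asserts non-vanishing for $\Re(s)>1$, which you invoke (via the Euler product for $L(1-s,\bar{\chi})$) but do not state separately for $L(s,\chi)$ itself; a single sentence noting that the absolutely convergent Euler product gives $L(s,\chi)\ne 0$ for $\Re(s)>1$ would make the write-up match the full lemma.
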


The next two lemmas give information about the special values of $L(s,  \chi)$ at non-positive integers. 
	\begin{lemma}\cite[p.~186]{Cohen-Henri}.
	Let $\chi$ be a primitive  character modulo $q$. Then for every integers $k\geq 1$, we have 
\begin{equation}	
	 L(-k, \chi) = -\frac{B_{k+1,\chi
	}}{k+1}, \label{L-value}
	\end{equation}
\end{lemma}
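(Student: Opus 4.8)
The plan is to reduce $L(-k,\chi)$ to special values of the Hurwitz zeta function and then to recognize the resulting finite sum of Bernoulli polynomials as the generalized Bernoulli number $B_{k+1,\chi}$. First I would split the defining Dirichlet series according to residues modulo $q$. For $\Re(s)>1$, writing each $n\ge 1$ uniquely as $n=mq+r$ with $1\le r\le q$ and $m\ge 0$ gives
$$
L(s,\chi)=\sum_{r=1}^{q}\chi(r)\sum_{m=0}^{\infty}\frac{1}{(mq+r)^{s}}=q^{-s}\sum_{r=1}^{q}\chi(r)\,\zeta\!\left(s,\tfrac{r}{q}\right),
$$
where $\zeta(s,a)=\sum_{n\ge 0}(n+a)^{-s}$ is the Hurwitz zeta function. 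Each $\zeta(s,r/q)$ continues meromorphically to $\mathbb{C}$ with its only (simple) pole at $s=1$, so the right-hand side is holomorphic at every $s=-k$ with $k\ge 1$, and the identity persists there by analytic continuation.

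Next I would invoke the classical special-value formula $\zeta(-k,a)=-B_{k+1}(a)/(k+1)$ for $k\ge 0$, where $B_{n}(x)$ denotes the Bernoulli polynomial; this is the Hurwitz analogue of the evaluation $\zeta(-\ell)=(-1)^{\ell}B_{\ell+1}/(\ell+1)$ recorded just above (the two agree at $a=1$). Inserting it into the previous display yields
$$
L(-k,\chi)=q^{k}\sum_{r=1}^{q}\chi(r)\,\zeta\!\left(-k,\tfrac{r}{q}\right)=-\frac{q^{k}}{k+1}\sum_{r=1}^{q}\chi(r)\,B_{k+1}\!\left(\tfrac{r}{q}\right).
$$

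The remaining ingredient is the distribution identity $B_{n,\chi}=q^{\,n-1}\sum_{r=1}^{q}\chi(r)B_{n}(r/q)$, which I would establish by comparing generating functions. Starting from the definition $\sum_{n\ge 0}B_{n,\chi}\,t^{n}/n!=\sum_{r=1}^{q}\chi(r)\,t\,e^{rt}/(e^{qt}-1)$ and substituting $u=qt$, each summand becomes $\tfrac{1}{q}\chi(r)\,u\,e^{(r/q)u}/(e^{u}-1)=\tfrac{1}{q}\chi(r)\sum_{n\ge 0}B_{n}(r/q)\,u^{n}/n!$; reading off the coefficient of $t^{n}$ after $u^{n}=q^{n}t^{n}$ gives the identity. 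Taking $n=k+1$ and substituting into the last display produces exactly $L(-k,\chi)=-B_{k+1,\chi}/(k+1)$, as claimed.

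The only genuinely nontrivial input is the Hurwitz special value $\zeta(-k,a)=-B_{k+1}(a)/(k+1)$; the rest is a rearrangement of periodic sums and a formal comparison of generating functions. I expect this to be the main obstacle only in the sense that, if a self-contained account is required, one must derive it from the Hankel-contour representation of $\zeta(s,a)$ and check that the contour integral at $s=-k$ extracts precisely the $t^{k+1}$-coefficient of $t\,e^{at}/(e^{t}-1)$; otherwise it may simply be cited, since it is the direct generalization of the $\zeta(-\ell)$ lemma already in use.
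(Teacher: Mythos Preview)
The paper does not prove this lemma at all; it is simply cited from Cohen's textbook \cite[p.~186]{Cohen-Henri} and used as a standard input. Your argument is correct and is precisely the classical proof one finds in such references: the Hurwitz decomposition you start from is exactly the paper's Lemma~\ref{Relation between L and Hurwitz zeta function}, the special value $\zeta(-k,a)=-B_{k+1}(a)/(k+1)$ is the standard Hurwitz analogue of the $\zeta(-\ell)$ evaluation recorded just above the lemma, and your generating-function comparison recovering $B_{n,\chi}=q^{\,n-1}\sum_{r=1}^{q}\chi(r)B_{n}(r/q)$ matches the definition \eqref{Generalized Bernoulli} of the generalized Bernoulli numbers. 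There is nothing to compare against in the paper itself, and your write-up would serve perfectly well as the omitted proof.
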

where $B_{k,  \chi}$ denotes as the generalized Bernoulli number defined as 
	\begin{equation}
		\sum_{k=1}^{q} \chi(k)\frac{te^{kt}}{e^{qt}-1}=\sum_{n=0}^{\infty} B_{n,\chi} \frac{t^n}{n!}. \label{Generalized Bernoulli}
	\end{equation}

	
	\begin{lemma}\label{L(0,chi)} \cite[p.~188]{Cohen-Henri}
	Suppose $\chi$ is a Dirichlet character modulo $q$,  then we have
	\begin{equation*}
		L(0,\chi) =
		\begin{cases}
			0, & \text{if}\ \chi\ \text{is even and } \ q>1,\\
			-\frac{1}{2},  & \text{if} \ \ q=1,\\
			-\frac{1}{q} \sum_{r=1}^{q-1}\chi(r) r,  &\text{if}\ \chi\ \text{is odd.}
		\end{cases}
	\end{equation*}
\end{lemma}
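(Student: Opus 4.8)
The plan is to reduce $L(0,\chi)$ to the single generalized Bernoulli number $B_{1,\chi}$ and then to read off that coefficient directly from the generating function \eqref{Generalized Bernoulli}. First I would observe that the evaluation \eqref{L-value}, namely $L(-k,\chi) = -B_{k+1,\chi}/(k+1)$, in fact remains valid at $k=0$, so that $L(0,\chi) = -B_{1,\chi}$. (One may alternatively reach the same point through the Hurwitz-zeta decomposition $L(s,\chi) = q^{-s}\sum_{r=1}^{q}\chi(r)\,\zeta(s,r/q)$ together with $\zeta(0,a) = \tfrac12 - a$; I mention this only as a sanity check, since the Bernoulli route keeps everything internal to the lemmas already recorded in this section.)

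Next I would extract $B_{1,\chi}$ by expanding the left-hand side of \eqref{Generalized Bernoulli} to first order in $t$. Writing $e^{qt}-1 = qt\bigl(1 + \tfrac{qt}{2} + O(t^2)\bigr)$ gives $\tfrac{t}{e^{qt}-1} = \tfrac1q\bigl(1 - \tfrac{qt}{2} + O(t^2)\bigr)$, and multiplying by $e^{kt} = 1 + kt + O(t^2)$ yields $\tfrac{t\,e^{kt}}{e^{qt}-1} = \tfrac1q + \bigl(\tfrac{k}{q} - \tfrac12\bigr)t + O(t^2)$. Summing against $\chi(k)$ and comparing the coefficient of $t$ in \eqref{Generalized Bernoulli} then gives
\begin{equation*}
B_{1,\chi} = \frac{1}{q}\sum_{k=1}^{q} k\,\chi(k) - \frac{1}{2}\sum_{k=1}^{q}\chi(k).
\end{equation*}

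The remaining work is an elementary three-way case analysis of the two character sums above. When $q=1$ the character is trivial and the formula collapses to $B_{1,\chi} = \tfrac12$, whence $L(0,\chi) = -\tfrac12$. When $q>1$ the character is nonprincipal, so $\sum_{k=1}^{q}\chi(k)=0$ and only the weighted sum $S := \sum_{k=1}^{q} k\,\chi(k)$ survives. The one place where a genuine idea enters is the reflection $k \mapsto q-k$: using periodicity, $\chi(q)=0$, and $\chi(-1)=(-1)^a$, one computes $S = \chi(-1)\sum_{k}(q-k)\chi(k) = \chi(-1)\bigl(q\cdot 0 - S\bigr) = -\chi(-1)\,S$. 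For even $\chi$ this forces $2S=0$, so $S=0$ and $L(0,\chi)=0$; for odd $\chi$ the relation is vacuous and we are left with $L(0,\chi) = -B_{1,\chi} = -\tfrac1q\sum_{r=1}^{q-1}\chi(r)\,r$, the term $k=q$ dropping out since $\chi(q)=0$. This matches the three cases in the statement exactly.

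I do not anticipate a serious obstacle: the entire argument is a short power-series computation followed by a symmetry argument, and each ingredient I invoke — the identity $L(0,\chi)=-B_{1,\chi}$, the defining series \eqref{Generalized Bernoulli}, and the orthogonality fact $\sum_{k=1}^{q}\chi(k)=0$ for nonprincipal $\chi$ — is already available. The only point that demands care is the bookkeeping of the parity condition $\chi(-1)=(-1)^a$ in the reflection step, since this is precisely what distinguishes the vanishing (even, $q>1$) case from the surviving (odd) case.
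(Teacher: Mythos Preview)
Your argument is correct and standard. Note, however, that the paper does not actually supply a proof of this lemma: it is quoted verbatim from Cohen's book \cite[p.~188]{Cohen-Henri} as a background fact, so there is no ``paper's own proof'' to compare against. What you have written is essentially the proof one finds in such references: identify $L(0,\chi)$ with $-B_{1,\chi}$, read off $B_{1,\chi}$ from the generating function \eqref{Generalized Bernoulli}, and then use the reflection $k\mapsto q-k$ together with $\chi(-1)=\pm 1$ to split the even and odd cases.

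One small point of bookkeeping: your sentence ``When $q>1$ the character is nonprincipal'' is not literally true as stated, since the principal character modulo any $q>1$ exists and is even. In the paper's context this is harmless because the results are applied only to primitive characters (and the principal character for $q>1$ is imprimitive), but if you want the lemma in the generality in which it is stated you should either note that the principal case also gives $B_{1,\chi_0}=0$ (the same pairing $k\leftrightarrow q-k$ on units shows $\sum_{(k,q)=1}k = q\varphi(q)/2$), or simply restrict to primitive $\chi$ from the outset. Your alternative route through $\zeta(0,a)=\tfrac12-a$ handles all characters uniformly and avoids this wrinkle entirely.
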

\begin{lemma} \label{L'(0,chi)}\cite[p.~189]{Cohen-Henri}
	Suppose $\chi$ is a character modulo $q.$ Then
	\begin{equation*}
		L^{\prime}(0,\chi) =
		\begin{cases}
			-\frac{1}{2} \sum_{r=1}^{q-1} \chi(r) \log\left(
			\sin\left(\frac{r\pi}{q}\right)\right) &  \text{if} \ \chi \ is \  \text{even and non-principle,}\\
			\sum_{r=1}^{q-1}\chi(r)\log\left(\Gamma\left(\frac{r}{q}\right)\right) -\log(q) L(0,\chi) & \text{if} \  \chi  \ \text{is odd,}\\
			-\frac{1}{2} \Lambda(q)  & \text{if} \ \chi \ \text{is principle and} \ q>1,\\
			-\frac{1}{2} \log(2\pi)  & \text{if} \ q=1.
		\end{cases}
	\end{equation*}
\end{lemma}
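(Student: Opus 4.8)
The plan is to reduce everything to the Hurwitz zeta function $\zeta(s,a)=\sum_{n\ge 0}(n+a)^{-s}$ and its behaviour at $s=0$. Splitting the Dirichlet series defining $L(s,\chi)$ into residue classes modulo $q$ gives, for $\Re(s)>1$ and then everywhere by analytic continuation, the decomposition $L(s,\chi)=q^{-s}\sum_{r=1}^{q}\chi(r)\,\zeta(s,r/q)$. Differentiating in $s$ and evaluating at $s=0$, and using $\sum_{r=1}^{q}\chi(r)\zeta(0,r/q)=L(0,\chi)$, I obtain
\begin{equation*}
L'(0,\chi)=-\log(q)\,L(0,\chi)+\sum_{r=1}^{q}\chi(r)\,\zeta'(0,r/q).
\end{equation*}
The decisive input is Lerch's formula $\zeta'(0,a)=\log\Gamma(a)-\tfrac12\log(2\pi)$ for $0<a\le 1$; substituting it yields the master identity
\begin{equation*}
L'(0,\chi)=-\log(q)\,L(0,\chi)+\sum_{r=1}^{q}\chi(r)\log\Gamma\!\left(\tfrac{r}{q}\right)-\tfrac12\log(2\pi)\sum_{r=1}^{q}\chi(r),
\end{equation*}
from which all four cases will drop out.

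For the non-principal cases, orthogonality gives $\sum_{r=1}^{q}\chi(r)=0$, so the last term vanishes, and since $\chi(q)=0$ for $q>1$ the $\log\Gamma$ sum runs over $1\le r\le q-1$. If $\chi$ is odd this is already the claimed formula. If $\chi$ is even and non-principal, then Lemma~\ref{L(0,chi)} gives $L(0,\chi)=0$, so I only need to convert the $\log\Gamma$ sum into the $\log\sin$ sum. Using $\chi(q-r)=\chi(-1)\chi(r)=\chi(r)$ I symmetrize,
\begin{equation*}
\sum_{r=1}^{q-1}\chi(r)\log\Gamma\!\left(\tfrac{r}{q}\right)=\tfrac12\sum_{r=1}^{q-1}\chi(r)\left[\log\Gamma\!\left(\tfrac{r}{q}\right)+\log\Gamma\!\left(1-\tfrac{r}{q}\right)\right],
\end{equation*}
and the reflection formula \eqref{Euler's reflection} turns each bracket into $\log\pi-\log\sin(\pi r/q)$; the constant $\log\pi$ cancels against $\sum_r\chi(r)=0$, leaving exactly $-\tfrac12\sum_{r=1}^{q-1}\chi(r)\log\sin(\pi r/q)$.

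The principal case $q>1$ I would treat separately via the Euler factorization $L(s,\chi_0)=\zeta(s)\prod_{p\mid q}(1-p^{-s})$. Each factor $1-p^{-s}$ vanishes at $s=0$, so $L(s,\chi_0)$ has a zero there of order equal to the number $k$ of distinct primes dividing $q$. When $k\ge 2$ both $L(0,\chi_0)$ and $L'(0,\chi_0)$ vanish, matching $-\tfrac12\Lambda(q)=0$; when $q=p^{a}$ is a prime power ($k=1$) only the single factor survives and $L'(0,\chi_0)=\zeta(0)\cdot\tfrac{d}{ds}(1-p^{-s})\big|_{s=0}=-\tfrac12\log p=-\tfrac12\Lambda(q)$, using $\zeta(0)=-\tfrac12$. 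Finally, for $q=1$ one has $L(s,\chi)=\zeta(s)$, and Lerch's formula at $a=1$ gives $\zeta'(0)=\log\Gamma(1)-\tfrac12\log(2\pi)=-\tfrac12\log(2\pi)$.

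The main obstacle is Lerch's formula itself, which carries all the analytic weight of the argument; I would establish it independently, for instance by differentiating the Hurwitz functional equation (equivalently, the Fourier--Lerch expansion of $\zeta(s,a)$) at $s=0$, or from the integral representation of $\log\Gamma$ combined with the Laurent expansion of $\zeta(s,a)$ about $s=0$. A secondary point requiring care is precisely the principal-character case, where the order of vanishing of $L(s,\chi_0)$ at $s=0$ depends on the number of prime divisors of $q$; tracking this uniformly is exactly what reproduces the von Mangoldt function $\Lambda(q)$.
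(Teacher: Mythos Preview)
The paper does not supply a proof of this lemma; it is stated with a citation to Cohen's book \cite[p.~189]{Cohen-Henri} and used as a known input. Your argument is correct and is in fact the standard derivation: express $L(s,\chi)$ through Hurwitz zeta functions (Lemma~\ref{Relation between L and Hurwitz zeta function}), differentiate, and invoke Lerch's formula $\zeta'(0,a)=\log\Gamma(a)-\tfrac12\log(2\pi)$, then specialize via parity/orthogonality for the non-principal cases and via the Euler product $L(s,\chi_0)=\zeta(s)\prod_{p\mid q}(1-p^{-s})$ for the principal case. Each step checks out, including the reflection-formula symmetrization in the even non-principal case and the order-of-vanishing bookkeeping that produces $\Lambda(q)$.
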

For $ 0<x \leq 1$,   the Hurwitz zeta function is defined as 	 $\zeta(s,x):= \sum_{n =0}^\infty (n+x)^{-s}$.  The below lemma provides a relation between $L(s,  \chi)$ and $\zeta(s, x)$.  
	\begin{lemma}\cite[p.~249]{apostol} \label{Relation between L and Hurwitz zeta function}
	For  any character  $\chi $  modulo $q$, we have
	$$ 
	 L(s,\chi)= q^{-s} \sum_{r=1}^{q} \chi(r)\zeta\left(s,\frac{r}{q}\right).
	 $$	
\end{lemma}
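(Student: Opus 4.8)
The plan is to prove the identity first on the half-plane $\Re(s)>1$, where both sides are given by absolutely convergent series, and then to extend it to all of $\mathbb{C}$ by analytic continuation. Concretely, I would start from the Dirichlet series $L(s,\chi)=\sum_{n=1}^{\infty}\chi(n)n^{-s}$, valid for $\Re(s)>1$, and split the summation index according to its residue class modulo $q$: each integer $n\geq 1$ is uniquely of the form $n=kq+r$ with $1\leq r\leq q$ and $k\geq 0$. Since $\chi$ has period $q$ we have $\chi(kq+r)=\chi(r)$, so rearranging the absolutely convergent series gives
$$
L(s,\chi)=\sum_{r=1}^{q}\chi(r)\sum_{k=0}^{\infty}\frac{1}{(kq+r)^{s}}=\sum_{r=1}^{q}\chi(r)\,q^{-s}\sum_{k=0}^{\infty}\frac{1}{\left(k+\frac{r}{q}\right)^{s}}=q^{-s}\sum_{r=1}^{q}\chi(r)\,\zeta\left(s,\tfrac{r}{q}\right),
$$
where I use the definition $\zeta(s,x)=\sum_{k\geq 0}(k+x)^{-s}$, valid for $0<x\leq 1$ and $\Re(s)>1$. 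This establishes the stated formula on $\Re(s)>1$.

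Next I would invoke the standard meromorphic continuations of both sides to $\mathbb{C}$. The Hurwitz zeta function $\zeta(s,x)$ extends to a meromorphic function on $\mathbb{C}$ whose only singularity is a simple pole at $s=1$ with residue $1$; hence the right-hand side $q^{-s}\sum_{r=1}^{q}\chi(r)\zeta(s,r/q)$ is meromorphic on $\mathbb{C}$ with at most a simple pole at $s=1$, of residue $q^{-1}\sum_{r=1}^{q}\chi(r)$. This residue vanishes unless $\chi$ is the principal character, which matches exactly the analytic behaviour of $L(s,\chi)$ (entire for $\chi$ non-principal, a simple pole at $s=1$ otherwise). Since two meromorphic functions on $\mathbb{C}$ that agree on a half-plane agree everywhere, the identity holds for all $s\in\mathbb{C}$.

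There is no genuine obstacle here: the only point worth a sentence of care is verifying that the potential pole at $s=1$ on the right-hand side cancels when $\chi$ is non-principal, which is immediate from the orthogonality relation $\sum_{r=1}^{q}\chi(r)=0$. Everything else is a routine rearrangement of an absolutely convergent series together with the well-known analytic continuation of the Hurwitz zeta function $\zeta(s,x)$.
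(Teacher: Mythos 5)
Your proof is correct and is precisely the standard argument behind the cited result in Apostol: split the Dirichlet series for $L(s,\chi)$ into residue classes modulo $q$, use periodicity of $\chi$ to recognize the Hurwitz zeta functions, and extend from $\Re(s)>1$ to all of $\mathbb{C}$ by analytic continuation. The paper states this lemma by reference without proof, and your remark that the potential pole at $s=1$ cancels for non-principal $\chi$ via $\sum_{r=1}^{q}\chi(r)=0$ is the right consistency check.
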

\begin{lemma}\cite[p.~95]{Tit} \label{boundzeta}
	Suppose $s=\sigma + iR$ be a complex number. Then for any $\sigma\geq \sigma_0$, 
	$\exists$ a  
	constant $M(\sigma_0)$, such that 
	\begin{equation}
		|\zeta(s) | \ll |R|^{M(\sigma_0)} 
	\end{equation}
	as $|R| \rightarrow \infty.$
\end{lemma}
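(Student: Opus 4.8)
The plan is to prove the polynomial growth bound by splitting the half-plane $\sigma \geq \sigma_0$ into a right region, where an elementary integral representation of $\zeta(s)$ applies directly, and a left region, where the functional equation \eqref{asymetric functional equation for zeta} reduces matters to the right region at the cost of a Gamma factor that is controlled by Stirling's bound \eqref{stirling equn}.

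First I would establish the bound for $\sigma \geq 1/2$. Starting from $\zeta(s) = \sum_{n\geq 1} n^{-s}$ for $\sigma > 1$ and applying partial summation — equivalently, writing $x = \lfloor x\rfloor + \{x\}$ inside $\int_1^\infty \lfloor x\rfloor\, x^{-s-1}\,\mathrm{d}x$ — one obtains the representation
$$\zeta(s) = \frac{s}{s-1} - s\int_1^\infty \frac{\{x\}}{x^{s+1}}\,\mathrm{d}x,$$
which by analytic continuation is valid for all $\sigma > 0$, $s\neq 1$. Since $0\leq \{x\} < 1$, the integral is bounded by $\int_1^\infty x^{-\sigma-1}\,\mathrm{d}x = 1/\sigma \leq 2$ for $\sigma \geq 1/2$, while $|s/(s-1)| \to 1$ as $|R|\to\infty$. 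Hence $|\zeta(s)| \ll |s| \ll |R|$ uniformly for $\sigma \geq 1/2$, so the claimed bound holds there with exponent $1$.

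Next I would treat the remaining range $\sigma_0 \leq \sigma < 1/2$ by means of the functional equation \eqref{asymetric functional equation for zeta}, which yields $|\zeta(s)| = 2^\sigma \pi^{\sigma-1}\,|\Gamma(1-s)|\,\bigl|\sin(\pi s/2)\bigr|\,|\zeta(1-s)|$. Here $1-s = (1-\sigma) - iR$ has real part $1-\sigma > 1/2$, so the first step already gives $|\zeta(1-s)| \ll |R|$. By Stirling's bound \eqref{stirling equn} one has $|\Gamma(1-s)| \ll |R|^{1/2-\sigma}\exp(-\pi|R|/2)$, and the elementary estimate $|\sin(\pi s/2)| \ll \exp(\pi|R|/2)$ cancels this exponential decay exactly. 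Since $2^\sigma\pi^{\sigma-1}$ stays bounded on the compact range $\sigma\in[\sigma_0,1/2]$, collecting the factors gives $|\zeta(s)| \ll |R|^{1/2-\sigma}\cdot|R| = |R|^{3/2-\sigma} \leq |R|^{3/2-\sigma_0}$ throughout this region.

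Combining the two regions, the lemma follows with $M(\sigma_0) = \max\{1,\ 3/2 - \sigma_0\}$; in particular when $\sigma_0 \geq 1/2$ the left region is empty and $M = 1$ suffices. The only genuine care point — and not a deep obstacle — is verifying the exact cancellation of the two exponential factors $\exp(-\pi|R|/2)$ from $|\Gamma(1-s)|$ and $\exp(\pi|R|/2)$ from $|\sin(\pi s/2)|$, together with checking that the resulting power of $|R|$ is largest at the left endpoint $\sigma = \sigma_0$, which is precisely what forces the exponent $M$ to depend on $\sigma_0$. The pole of $\zeta(s)$ at $s=1$ causes no difficulty, since the estimate is asymptotic as $|R|\to\infty$ and the factor $s/(s-1)$ is bounded away from that pole.
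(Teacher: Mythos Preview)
Your argument is correct: the integral representation $\zeta(s) = s/(s-1) - s\int_1^\infty \{x\}\,x^{-s-1}\,\mathrm{d}x$ gives the bound $|\zeta(s)| \ll |R|$ on $\sigma \geq 1/2$ (with the trivial observation that for $\sigma \geq 2$ one has $|\zeta(s)| \leq \zeta(2)$, so the unbounded-$\sigma$ case needs no care), and the functional equation together with Stirling's bound \eqref{stirling equn} and the estimate $|\sin(\pi s/2)| \ll e^{\pi|R|/2}$ handles $\sigma_0 \leq \sigma < 1/2$, the exponentials cancelling exactly as you note.

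The paper itself does not prove this lemma at all: it is quoted directly from Titchmarsh \cite[p.~95]{Tit} as a standard background fact, with no argument supplied. Your write-up is precisely the classical proof one finds in Titchmarsh, so in that sense you have reconstructed what the cited reference contains rather than offered an alternative route.
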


\begin{lemma} \cite[p.~97, Lemma 5.2] {IK} \label{bound_L(s,chi)}
	Let $s=\sigma + iR \in \mathbb{C}$ and $\chi$ be any character modulo $q$. Then for any  $\sigma_0 \leq \sigma \leq b$, $\exists$ a constant $A(\sigma_0)$, such that 
	\begin{equation}
		|L (s, \chi)| \ll |R|^{A(\sigma_0)}
	\end{equation}
as $|R|\rightarrow \infty.$
\end{lemma}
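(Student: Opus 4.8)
The plan is to bound $|L(s,\chi)|$ by dividing the strip $\sigma_0 \le \sigma \le b$ into a right part, a left part, and a central strip, and gluing these with a Phragm\'en--Lindel\"of argument. First I would reduce to a primitive character: if $\chi^{*}$ is the primitive character modulo the conductor inducing $\chi$, then $L(s,\chi)=L(s,\chi^{*})\prod_{p\mid q}(1-\chi^{*}(p)p^{-s})$, and the finite product has modulus bounded in $R$ on any fixed vertical strip (each factor satisfies $|1-\chi^{*}(p)p^{-s}|\le 1+p^{-\sigma_0}$, which is $R$-independent), so it suffices to bound $L(s,\chi^{*})$; thus I may assume $\chi$ primitive, and in particular $L(s,\chi)$ entire unless $\chi$ is principal. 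Since enlarging the strip only weakens the conclusion, I also assume $\sigma_0\le -1$ and $b\ge 2$.

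On the right, for $2\le\sigma\le b$ the Dirichlet series converges absolutely and $|L(s,\chi)|\le\sum_{n\ge 1}n^{-\sigma}=\zeta(\sigma)\le\zeta(2)$, giving $L(s,\chi)=O(1)$ uniformly in $R$. On the left, for $\sigma_0\le\sigma\le -1$ I would use the functional equation \eqref{Dirichlet L-function}: since $\Re(1-s)=1-\sigma\ge 2$, the factor $L(1-s,\bar\chi)$ lies in the region of absolute convergence and is $O(1)$. The prefactor $2^{s}\pi^{s-1}q^{1/2-s}$ has modulus depending only on $\sigma$; the factor $\sin(\tfrac{\pi}{2}(s+a))$ grows like $e^{\pi|R|/2}$; and Stirling's bound \eqref{stirling equn} gives $|\Gamma(1-s)|\ll|R|^{1/2-\sigma}e^{-\pi|R|/2}$. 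The two exponentials cancel, leaving $|L(s,\chi)|\ll|R|^{1/2-\sigma}\le|R|^{1/2-\sigma_0}$ throughout the left part.

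It remains to treat the central strip $-1\le\sigma\le 2$, where neither the Dirichlet series nor the functional equation lands us in a region of convergence. Here I would invoke the Phragm\'en--Lindel\"of principle. For $\chi$ nonprincipal, $L(s,\chi)$ is entire, and the functional equation together with \eqref{stirling equn} shows it is of finite order in the strip (it grows no faster than $\exp(C|R|\log|R|)$). A function holomorphic and of finite order in $-1\le\sigma\le 2$ that is $O(|R|^{a_1})$ on $\sigma=-1$ and $O(|R|^{a_2})$ on $\sigma=2$ is then $\ll|R|^{\max(a_1,a_2)}$ throughout; taking $a_1=\tfrac32$ (the left-edge estimate above at $\sigma=-1$) and $a_2=0$ gives the bound in the central strip. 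Combining the three regions yields $|L(s,\chi)|\ll|R|^{A(\sigma_0)}$ with an exponent $A(\sigma_0)$ depending only on $\sigma_0$ (one may take $A(\sigma_0)=\max(0,\tfrac12-\sigma_0)$). For the principal character the same scheme applies after multiplying by $(s-1)$ to cancel the pole of $\zeta(s)$ at $s=1$, which only affects $R=0$ and hence does not alter the estimate as $|R|\to\infty$; alternatively one simply quotes Lemma \ref{boundzeta} for the $\zeta$-factor.

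I expect the central strip to be the main obstacle: the two clean estimates --- absolute convergence on the right and the functional equation on the left --- say nothing about the interior, so the whole weight falls on the finite-order hypothesis required for Phragm\'en--Lindel\"of. Verifying that hypothesis rigorously from the functional equation and \eqref{stirling equn}, and handling the pole of the principal character at $s=1$ without spoiling the estimate as $|R|\to\infty$, is where the care lies; the remaining work is routine bookkeeping of Stirling's formula and of the elementary growth of $\sin(\tfrac{\pi}{2}(s+a))$.
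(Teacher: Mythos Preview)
The paper does not prove this lemma at all; it merely cites it from Iwaniec--Kowalski \cite[p.~97, Lemma 5.2]{IK} as a standard fact. Your sketch is the classical Phragm\'en--Lindel\"of argument (absolute convergence on the right, functional equation plus Stirling on the left, convexity in between, with the routine reduction to primitive $\chi$), which is precisely the argument one finds in the cited reference and is correct as outlined.
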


\begin{lemma} \label{laurent of zeta}\cite[p.~206]{Cohen-Henri} 
	The Laurent series expansion of $\zeta(s)$ around $s=1$ is given by
	\begin{equation}
		\zeta(s) = \frac{1}{s-1} + \sum_{k=0}^\infty (-1)^k \frac{\gamma_k}{k!}(s-1)^k = \frac{1}{s-1} +\gamma +O(s-1),
	\end{equation}
where the constants $\gamma_k$'s are called {\it  Stieltjes constants}.  Furthermore,  the Taylor series of $\zeta(s)$ around $s=0$ is given by 
\begin{equation}\zeta(s) =-\frac{1}{2} - \frac{1}{2}\log(2\pi) s + \frac{1}{48}(24\gamma_1 + 12\gamma^2 -\pi^2 -12(\log(2\pi))^2)s^2 + O(s^3).
\end{equation}
\end{lemma}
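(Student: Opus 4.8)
The plan is to establish the two expansions separately, deriving the Taylor expansion about $s=0$ from the Laurent expansion about $s=1$ together with the functional equation \eqref{asymetric functional equation for zeta}.

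For the expansion about $s=1$, I would start from the representation valid for $\Re(s)>0$,
\begin{equation*}
\zeta(s) = \frac{1}{s-1} + 1 - s \int_1^\infty \frac{\{u\}}{u^{s+1}}\, du,
\end{equation*}
obtained by partial summation (Euler--Maclaurin) from the Dirichlet series, where $\{u\}$ denotes the fractional part of $u$. This exhibits $s=1$ as a simple pole of residue $1$, so $\zeta(s) - \frac{1}{s-1}$ is holomorphic near $s=1$ and hence has a convergent Taylor expansion $\sum_{k\ge 0} c_k (s-1)^k$. Setting $c_k = \frac{(-1)^k}{k!}\gamma_k$ is exactly the definition of the Stieltjes constants $\gamma_k$ and yields the first asserted series. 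Passing to the limit $s\to 1$ in the displayed formula and using the classical identity $\int_1^\infty \{u\}\, u^{-2}\,du = 1-\gamma$ gives $c_0 = \gamma_0 = \gamma$, the stated constant term.

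For the expansion about $s=0$, I would insert the standard local expansions of each factor of \eqref{asymetric functional equation for zeta} and multiply. Near $s=0$ one has the elementary series $2^s = 1 + (\log 2)s + \tfrac12(\log 2)^2 s^2 + O(s^3)$, $\pi^{s-1} = \pi^{-1}\bigl(1 + (\log\pi)s + \tfrac12(\log\pi)^2 s^2\bigr) + O(s^3)$, $\Gamma(1-s) = 1 + \gamma s + \bigl(\tfrac{\gamma^2}{2} + \tfrac{\pi^2}{12}\bigr)s^2 + O(s^3)$ and $\sin\!\bigl(\tfrac{\pi s}{2}\bigr) = \tfrac{\pi}{2}s\bigl(1 - \tfrac{\pi^2}{24}s^2\bigr) + O(s^5)$. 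The only nonelementary factor is $\zeta(1-s)$, which I obtain from the first part by substituting the argument $w=1-s$, so that $w-1 = -s$ and
\begin{equation*}
\zeta(1-s) = -\frac{1}{s} + \gamma + \gamma_1 s + \frac{\gamma_2}{2}s^2 + O(s^3).
\end{equation*}
The simple pole of $\zeta(1-s)$ at $s=0$ is cancelled by the factor $\sin(\pi s/2)\sim \tfrac{\pi}{2}s$, so the product is holomorphic at $s=0$; I would then multiply the five series and collect the coefficients of $s^0,s^1,s^2$.

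The main obstacle is entirely the bookkeeping in this last multiplication. Because $\zeta(1-s)$ contributes a pole, each of the remaining factors must be carried to the correct order for the product to be accurate through $s^2$, and the various quadratic contributions must be combined carefully: the $\gamma^2$ from $\Gamma(1-s)$, the $\pi^2$ arising from both $\Gamma(1-s)$ and the cubic term of the sine (the latter scaled by the $\pi^{-1}$ in $\pi^{s-1}$), the $(\log 2\pi)^2$ coming from the cross terms of $2^s$ and $\pi^{s-1}$, and the crucial $\gamma_1$ coming from the linear term of $\zeta(1-s)$ paired with $\tfrac{\pi}{2}s$. The coefficients of $s^0$ and $s^1$ should reproduce the known values $\zeta(0) = -\tfrac12$ and $\zeta'(0) = -\tfrac12\log(2\pi)$, furnishing a consistency check, while the $s^2$ coefficient simplifies to $\tfrac{1}{48}\bigl(24\gamma_1 + 12\gamma^2 - \pi^2 - 12(\log 2\pi)^2\bigr)$, completing the proof.
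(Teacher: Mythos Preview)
Your argument is correct: the Euler--Maclaurin representation gives the Laurent expansion at $s=1$ with $\gamma_0=\gamma$, and multiplying out the functional equation \eqref{asymetric functional equation for zeta} with the local expansions you list does indeed yield $\zeta(0)=-\tfrac12$, $\zeta'(0)=-\tfrac12\log(2\pi)$, and the stated $s^2$ coefficient $\tfrac{1}{48}(24\gamma_1+12\gamma^2-\pi^2-12(\log 2\pi)^2)$.

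That said, the paper does not actually prove this lemma. It is quoted as a known result from Cohen's book \cite[p.~206]{Cohen-Henri}, and the paper only \emph{uses} the first few terms of each expansion (see the list \eqref{Laurent series}) when computing the residues in the proof of Theorem~\ref{N-2h=-1 case}. So there is no proof in the paper to compare against; you have supplied a self-contained derivation where the paper simply appeals to the literature. Your approach via the functional equation is the standard one and is likely close to what Cohen does, but within this paper the lemma is treated as background rather than something to be proved.
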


\subsection{Some trigonometric identities}
In this subsection,  we inscribe a few trigonometric identities which  will serve a major role in proving our main identities.
\begin{lemma}\cite[Lemma 4.3]{DGKM20}   \label{use in k even r even}
	Suppose $z \in \mathbb{C}$.  Then for any $m \in \mathbb{N}$,
	\begin{align}\label{sin/sin}
		\frac{\sin(mz)}{\sin(z)}=\sideset{}{''}\sum_{j=-(m-1)}^{(m-1)}  \exp(izj),
	\end{align}
	where $''$ indicates that the summation runs through $ j=-(m-1),-(m-3), \cdots ,(m-3),(m-1)$.
	Thus,  for $m$ even, 
	\begin{equation}\label{use in k even r odd}
		\frac{\sin(mz)}{\cos(z)}=(-1)^{\frac{m}{2}}\sideset{}{''}\sum_{j=-(m-1)}^{(m-1)}  i^j \exp (izj),
	\end{equation}
	and for $m$ odd,
	\begin{equation}\label{use in k odd r odd}
		\frac{\cos(mz)}{\cos(z)}=(-1)^{\frac{m-1}{2}}\sideset{}{''}\sum_{j=-(m-1)}^{(m-1)}   i^j \exp (-izj). 
	\end{equation}
\end{lemma}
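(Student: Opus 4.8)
The plan is to establish the first identity \eqref{sin/sin} directly as a finite geometric series, and then to deduce the remaining two identities by the single substitution $z \mapsto z + \pi/2$, tracking how the shift acts on the numerator according to the parity of $m$.

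First I would set $w = e^{iz}$ and observe that the right-hand side of \eqref{sin/sin} is the symmetric geometric sum $\sum'' w^{j}$ with $j$ ranging over $-(m-1), -(m-3), \dots, (m-1)$, i.e. $m$ terms with common ratio $w^{2}$. Factoring out $w^{-(m-1)}$ and summing gives $w^{-(m-1)}\frac{w^{2m}-1}{w^{2}-1} = \frac{w^{2m}-1}{w^{m+1}-w^{m-1}}$; multiplying numerator and denominator by $w^{-m}$ converts this into $\frac{w^{m}-w^{-m}}{w-w^{-1}}$, which is exactly $\frac{2i\sin(mz)}{2i\sin(z)} = \frac{\sin(mz)}{\sin(z)}$. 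This computation is valid whenever $w^{2}\neq 1$; the excluded points $z\in\pi\mathbb{Z}$ follow by continuity, or equivalently by viewing both sides as the same entire function after clearing the denominator $\sin z$.

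Next, for the other two identities I would substitute $z \mapsto z + \pi/2$ into \eqref{sin/sin}. The denominator becomes $\sin(z+\pi/2)=\cos z$, while the right-hand side becomes $\sum'' e^{i(z+\pi/2)j} = \sum'' i^{j} e^{izj}$, using $e^{i\pi j/2}=i^{j}$. For the numerator I use the shift formula applied to $\sin\!\left(mz + \tfrac{m\pi}{2}\right)$. When $m$ is even, writing $m=2k$ gives $\sin(mz + k\pi)=(-1)^{m/2}\sin(mz)$, and rearranging yields $\frac{\sin(mz)}{\cos z}=(-1)^{m/2}\sum'' i^{j}e^{izj}$, which is \eqref{use in k even r odd}. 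When $m$ is odd, writing $m=2k+1$ gives $\sin\!\left(mz + k\pi + \tfrac{\pi}{2}\right)=(-1)^{(m-1)/2}\cos(mz)$, which yields the variant $\frac{\cos(mz)}{\cos z}=(-1)^{(m-1)/2}\sum'' i^{j}e^{izj}$.

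Finally, to match \eqref{use in k odd r odd} exactly I would observe that when $m$ is odd every index $j$ in the sum is even, so $i^{j}$ is real and $i^{-j}=i^{j}$; relabelling $j\mapsto -j$ in the symmetric sum then shows $\sum'' i^{j}e^{izj}=\sum'' i^{j}e^{-izj}$, giving the stated form with $e^{-izj}$. There is no serious obstacle in this argument; the only points requiring genuine care are the bookkeeping of the two different signs $(-1)^{m/2}$ and $(-1)^{(m-1)/2}$ produced by the parity of $m$ in the shift, and the parity of the indices $j$ themselves (all odd when $m$ is even, all even when $m$ is odd), which is precisely what permits interchanging $e^{izj}$ with $e^{-izj}$ in the odd case while forbidding it in the even one.
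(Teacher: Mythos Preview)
Your argument is correct. The geometric-series evaluation of the symmetric sum gives \eqref{sin/sin} cleanly, and the single shift $z\mapsto z+\tfrac{\pi}{2}$ together with the parity bookkeeping on $m$ and on the indices $j$ yields both \eqref{use in k even r odd} and \eqref{use in k odd r odd} exactly as you describe.

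As for comparison: the paper does not prove this lemma at all---it is quoted from \cite[Lemma~4.3]{DGKM20} and stated without proof. Your self-contained derivation is therefore an addition rather than an alternative; it is short, elementary, and would serve perfectly well as an inline justification.
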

\begin{lemma}\cite[Lemma 3.1]{DM20}\label{dixit_maji_lemma 1}
Suppose $\alpha,\beta,\gamma$ are three real numbers. Then we have
	\begin{align*}
		 2\Re\left(\frac{e^{i\alpha\beta}}{\exp(\gamma e^{-i\alpha})-1}\right)=\frac{\cos(\gamma\sin(\alpha)+\alpha \beta)-e^{-\gamma \cos(\alpha)}\cos(\alpha \beta)}{\cosh(\gamma\cos(\alpha))-\cos(\gamma\sin(\alpha))}.
	\end{align*}

\end{lemma}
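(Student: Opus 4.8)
The plan is to verify this identity by a direct computation: I would separate the complex quantity into its real and imaginary parts and then rationalize the fraction. First I would expand the inner exponent as $\gamma e^{-i\alpha} = \gamma\cos\alpha - i\gamma\sin\alpha$, so that
\begin{align*}
\exp\left(\gamma e^{-i\alpha}\right) = e^{\gamma\cos\alpha}\left(\cos(\gamma\sin\alpha) - i\sin(\gamma\sin\alpha)\right).
\end{align*}
Writing $c := \gamma\cos\alpha$ and $s := \gamma\sin\alpha$ for brevity, the denominator becomes $D := \exp(\gamma e^{-i\alpha}) - 1 = (e^{c}\cos s - 1) - i\, e^{c}\sin s$, with complex conjugate $\bar{D} = (e^{c}\cos s - 1) + i\, e^{c}\sin s$.

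Next I would rationalize by multiplying the numerator and denominator of the fraction by $\bar{D}$. A short computation, using $\cos^2 s + \sin^2 s = 1$ to collapse the cross terms, gives
\begin{align*}
|D|^{2} = (e^{c}\cos s - 1)^{2} + (e^{c}\sin s)^{2} = e^{2c} - 2e^{c}\cos s + 1.
\end{align*}
For the numerator $e^{i\alpha\beta}\bar{D}$, taking the real part and applying the angle-addition formula $\cos(\alpha\beta)\cos s - \sin(\alpha\beta)\sin s = \cos(\alpha\beta + s)$ yields
\begin{align*}
\Re\left(e^{i\alpha\beta}\bar{D}\right) &= \cos(\alpha\beta)(e^{c}\cos s - 1) - \sin(\alpha\beta)\, e^{c}\sin s \\
&= e^{c}\cos(\alpha\beta + s) - \cos(\alpha\beta).
\end{align*}

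Combining these and multiplying by $2$, I obtain
\begin{align*}
2\Re\left(\frac{e^{i\alpha\beta}}{\exp(\gamma e^{-i\alpha})-1}\right) = \frac{2e^{c}\cos(\alpha\beta + s) - 2\cos(\alpha\beta)}{e^{2c} - 2e^{c}\cos s + 1}.
\end{align*}
Finally I would divide the numerator and denominator by $2e^{c}$ and use $\tfrac{1}{2}(e^{c}+e^{-c}) = \cosh c$; this turns the denominator into $\cosh c - \cos s$ and the numerator into $\cos(\alpha\beta + s) - e^{-c}\cos(\alpha\beta)$. Substituting back $c = \gamma\cos\alpha$ and $s = \gamma\sin\alpha$ then recovers the stated identity verbatim.

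There is no genuine obstacle here; the lemma is an elementary exercise in complex arithmetic, and the hardest part is merely careful bookkeeping. The three points deserving attention are the sign of the imaginary part of $\exp(\gamma e^{-i\alpha})$ (the $-i\sin(\gamma\sin\alpha)$ arising from the minus sign in $e^{-i\alpha}$), the clean cancellation producing $|D|^2 = e^{2c} - 2e^{c}\cos s + 1$, and the recognition of $\cosh c$ after dividing by $2e^{c}$. The computation is valid for all real $\alpha,\beta,\gamma$ for which the denominator $\exp(\gamma e^{-i\alpha}) - 1$ is nonzero, which is the only hypothesis implicitly required for the statement.
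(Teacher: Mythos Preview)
Your proof is correct; it is the natural direct computation for this elementary identity. The paper itself does not supply a proof of this lemma but simply cites it from \cite[Lemma 3.1]{DM20}, so there is no alternative argument to compare against.
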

	


The next section contains all the proofs of all the main identities of this paper.  

\section{Proof of the main identities}

Before going to the proof of the main Theorem \ref{Main theorem},  we first state the following lemma.
\begin{lemma} \label{integral lemma}
		Let $ x>0,$ $ N \in \mathbb{N}$ and $\chi$  be any Dirichlet character modulo $q$, $q \in \mathbb{N}$. Then for any $h \in \mathbb{Z}$, we have
		
		\begin{equation*}
					\sum_{r=1}^{q}\sum_{n=1}^{\infty}\frac{\chi(r)n^{N-2h}\exp{\left(-\frac{r}{q}n^N x\right)}}{1-\exp({-n^N x})} =\frac{1}{2\pi i} \int_{(c_0)} \Gamma(s) L(s,\chi) \zeta(Ns -(N -2h))  \left(\frac{q}{x}\right)^{s} \mathrm{d}s,
		\end{equation*}	
	where $c_0 = \Re(s) > \max \{1, (N-2h+1)/N\} .$ 
\end{lemma}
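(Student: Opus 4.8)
The plan is to expand the summand of \eqref{F-function} as a geometric series, insert the inverse Mellin representation \eqref{Inverse Mellin} of the exponential, and then recognize the resulting integrand as a product of Dirichlet series.

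First I would use that, since $x>0$, one has $0<\exp(-n^{N}x)<1$ for all $n\geq 1$, so that
\[
\frac{\exp\left(-\frac{r}{q}n^{N}x\right)}{1-\exp(-n^{N}x)}=\sum_{k=0}^{\infty}\exp\left(-\frac{kq+r}{q}\,n^{N}x\right),
\]
which turns $F(2h-N,x,\chi)$ into a triple sum over $1\leq r\leq q$, $n\geq 1$, and $k\geq 0$. Next, for each such term, $z:=\tfrac{kq+r}{q}n^{N}x$ has positive real part, so \eqref{Inverse Mellin} with $c=c_{0}$ gives
\[
\exp\left(-\tfrac{kq+r}{q}n^{N}x\right)=\frac{1}{2\pi i}\int_{(c_{0})}\Gamma(s)\left(\frac{q}{x}\right)^{s}(kq+r)^{-s}n^{-Ns}\,\mathrm{d}s.
\]
Granting for the moment that the triple sum may be moved inside the integral, I would arrive at
\[
F(2h-N,x,\chi)=\frac{1}{2\pi i}\int_{(c_{0})}\Gamma(s)\left(\frac{q}{x}\right)^{s}\left(\sum_{r=1}^{q}\sum_{k=0}^{\infty}\chi(r)(kq+r)^{-s}\right)\left(\sum_{n=1}^{\infty}n^{N-2h-Ns}\right)\mathrm{d}s.
\]
Since $kq+r$ runs exactly once over $\mathbb{N}$ as $r$ ranges over $\{1,\dots,q\}$ and $k$ over $\{0,1,2,\dots\}$, the first inner bracket equals $\sum_{m=1}^{\infty}\chi(m)m^{-s}=L(s,\chi)$ (equivalently, one may rewrite it as $q^{-s}\sum_{r=1}^{q}\chi(r)\zeta(s,r/q)$ and invoke Lemma~\ref{Relation between L and Hurwitz zeta function}), convergent for $\Re(s)>1$; the second inner bracket is $\zeta(Ns-(N-2h))$, convergent for $\Re\bigl(Ns-(N-2h)\bigr)>1$, i.e. $\Re(s)>(N-2h+1)/N$. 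On the line $\Re(s)=c_{0}>\max\{1,(N-2h+1)/N\}$ this is exactly the claimed identity.

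The hard part will be justifying the interchange of the three summations with the contour integral; I would do this by absolute convergence. On $\Re(s)=c_{0}$, Stirling's estimate \eqref{stirling equn} gives $|\Gamma(c_{0}+iR)|\ll|R|^{c_{0}-1/2}\exp(-\pi|R|/2)$, which is integrable in $R\in\mathbb{R}$, while, using $|\chi(r)|\leq 1$,
\[
\sum_{r=1}^{q}\sum_{k=0}^{\infty}\sum_{n=1}^{\infty}(kq+r)^{-c_{0}}\,n^{N-2h-Nc_{0}}\leq\zeta(c_{0})\,\zeta\bigl(Nc_{0}-(N-2h)\bigr)<\infty
\]
precisely because $c_{0}>\max\{1,(N-2h+1)/N\}$. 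Hence the associated double sum/integral is absolutely convergent, Fubini's theorem applies, and all the rearrangements above are legitimate, which completes the proof.
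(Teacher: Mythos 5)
Your proposal is correct and follows essentially the same route as the paper: expand the denominator as a geometric series, apply the inverse Mellin representation of $\exp(-z)$, and interchange summation and integration to identify the Dirichlet series factors (the paper identifies the $r,k$-sum via the Hurwitz zeta function and Lemma~\ref{Relation between L and Hurwitz zeta function}, while you also note the equivalent direct identification $\sum_{r,k}\chi(r)(kq+r)^{-s}=L(s,\chi)$). Your explicit Fubini justification via Stirling's bound and absolute convergence on the line $\Re(s)=c_0$ is a welcome addition that the paper leaves implicit.
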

\begin{proof}
Note that for $x>0,  n \in \mathbb{N}, $ one has $|\exp{(-n^N x)}|<1 .$ We write
 \begin{equation} \label{exp(1)}
\frac{1}{1-\exp{(-n^N x)}} = \sum_{k=0}^{\infty}\exp{(-n^N x k)}.
	\end{equation}
Now, using inverse Mellin transform  \eqref{Inverse Mellin} of $\Gamma(s)$,  for $c_0>0$,  one has
\begin{align}
	\exp{ \left( -n^N x \left(k +\frac{r}{q}\right)\right)} &= \frac{1}{2 \pi i}\int_{(c_0)} \Gamma(s) \left( n^N x \left(k +\frac{r}{q}\right)\right)^{-s} \mathrm{d}s \nonumber \\
	&= \frac{1}{2 \pi i}\int_{(c_0)} \frac{\Gamma(s) x^{-s} \left(k +\frac{r}{q}\right)^{-s}}{n^{Ns}}  \mathrm{d}s.  \label{exp_interms_gamma}
\end{align}
Use \eqref{exp(1)} and \eqref{exp_interms_gamma}  to see that
	\begin{align*}
	\allowdisplaybreaks
	\sum_{r=1}^{q} \sum_{n=1}^{\infty} 
	\frac{\chi(r)n^{N-2h}\exp{\left(-\frac{r}{q}n^N x \right)}}{1-\exp({-n^N x})}
	&=  \sum_{r=1}^{q} \sum_{n=1}^{\infty} \chi(r) n^{N-2h} \sum_{k=0}^{\infty} \frac{1}{2 \pi i}\int_{(c_0)} \frac{\Gamma(s) x^{-s} \left(k +\frac{r}{q}\right)^{-s}}{n^{Ns}} \mathrm{d}s\\
	&= \frac{1}{2 \pi i} \int_{(c_0)}\Gamma(s) \sum_{r=1}^{q}\chi(r) \sum_{n=1}^{\infty}\frac{1}{n^{Ns-N +2h}}\sum_{k=0}^{\infty} \frac{1}{\left(k +\frac{r}{q}\right)^s}\frac{\mathrm{d}s}{x^{s}}  \\
	&= \frac{1}{2 \pi i} \int_{(c_0)} \Gamma(s) \zeta(Ns-(N-2h)) \sum_{r=1}^{q} \chi(r) \zeta\left(s, \frac{r}{q}\right) \frac{\mathrm{d}s}{x^{s}}  \\
	&= \frac{1}{2\pi i}\int_{(c_0)}\Gamma(s) \zeta(Ns-(N-2h)) L(s,\chi) \left(\frac{x}{q}\right)^{-s}\mathrm{d}s,
	\end{align*}
 for $c_0 > \max \{{1, {(N-2h+1)}/{N}}\}.$
  In the final step we have used Proposition \ref{Relation between L and Hurwitz zeta function}.
\end{proof}

\begin{proof}[Theorem \textup{\ref{Main theorem}}][]
From Lemma \ref{integral lemma},   for $\Re(s) = c_0 > \max\{ 1, (N-2h+1)/N\}$,   we have seen that
	\begin{equation} \label{integral}
	\sum_{r=1}^{q}\sum_{n=1}^{\infty}\frac{\chi(r)n^{N-2h}\exp{(-\frac{r}{q}n^N x)}}{1-\exp({-n^N x})} =\frac{1}{2\pi i} \int_{(c_0)}\Gamma(s)L(s,\chi)\zeta(Ns -(N-2h))  \left(\frac{x}{q}\right)^{-s} \mathrm{d}s.
\end{equation}
Now our main goal is to simplify the above integral and to do that we move the line integration  from $\Re(s) = c_0$ to $\Re(s) =d_0,$ where  we cleverly consider $d_0$ such that 
$$  - \left(2 \bigg\lfloor \frac{h}{N} \bigg\rfloor  +1 \right) - \epsilon < d_0  <  - \left(2 \bigg\lfloor \frac{h}{N}\bigg\rfloor  +1 \right)
$$ for some small positive $\epsilon$.   Later,  we shall explain the purpose of considering this bounds for $d_0$.  First,  we shall analyse the singularities of the integrand.  

Note that the only singularities of $\Gamma(s)$ are at non-positive integers and all of them are simple pole.   Again,  we know,  $s=1$ is the only simple pole of $\zeta(s)$.  Thus,  $s=(N-2h +1)/N$ is the only simple pole of $\zeta(Ns-(N-2h))$.   Moreover,  for any $ j\in \mathbb{N}$,  $s= (N-2h-2j)/N$ are the trivial zeros of $\zeta(Ns-(N-2h))$.   It is also given that $\chi$ is a primitive Dirichlet character modulo $q$.  For $q>1$,  one knows that $L(s,\chi)$ is an entire function and on the other hand, if $q=1$, it  has a simple pole at $s=1$.  
We know $\Gamma(s)$ has simple poles at non-positive integers.   We consider a contour $\mathcal{C}$ with the line segments $ [c_0 -iT, c_0 +iT], [c_0 +iT ,d_0 +iT], [d_0 +iT, d_0 -iT] $, and $ [d_0 -iT, c_0 -iT]$,  where $T$ is some large positive quantity.   
  Thus,  one can see that the only poles of integrand that are lying inside the contour $\mathcal{C}$,  namely,  at $s=0,  1,  (N-2h+1)/N,  -j$,  for $1 \leq j \leq 2 \lfloor \frac{ h}{N} \rfloor +1$,  and all are simple.

\begin{center}
	\begin{tikzpicture}[very thick,decoration={
			markings,
			mark=at position 0.6 with {\arrow{>}}}] 
		\draw[thick,dashed,postaction={decorate}] (-7.2,-2)--(2,-2) node[below right, black] {$c_{0}-i T$};
		\draw[thick,dashed,postaction={decorate}] (2,-2)--(2,2)  node[above right, black] {$c_{0}+iT$} ;
		\draw[thick,dashed,postaction={decorate}] (2,2)--(-7.2,2) node[left, black] {$d_{0}+i T$}; 
		\draw[thick,dashed,postaction={decorate}] (-7.2,2)--(-7.2,-2)  node[below left, black] {$d_{0}-i T$}; 
		\draw[thick, <->] (-9,0) -- (5,0) coordinate (xaxis);
		\draw[thick, <->] (-0,-4) -- (-0,4)node[midway, above right, black] {\tiny$0$} coordinate (yaxis);
		\draw (1,0.1)--(1,-0.1) node[midway, above, black] {\tiny1} ;
		\draw (-1,0.1)--(-1,-0.1) node[midway, above, black] {\tiny-1} ;
		\draw (-2,0.1)--(-2,-0.1) node[midway, above, black] {\tiny-2} ;
		\draw (-6.5,0.1)--(-6.5,-0.1) node[midway, above, black] {\tiny$- 2\lfloor \frac{h}{n} \rfloor-1$} ;
		\draw (-8,0.1)--(-8,-0.1) node[midway, above, black] {\tiny$-2\lfloor\frac{h}{n} \rfloor-2$} ;
		\node[above] at (xaxis) {$\Re(s)$};
		\node[right]  at (yaxis) {$\Im(s)$};
	\end{tikzpicture}
\end{center}
Now,  making use of  Cauchy's residue theorem,  we arrive at
 \begin{align} \label{residue theorem}
 	\frac{1}{2\pi i} \int_\mathcal{C} \Gamma(s) L(s,\chi) \zeta(Ns-(N-2h)) \left(\frac{q}{x}\right)^{s} \mathrm{d}s &   =   \mathcal{R}_0 +\mathcal{R}_1+ \mathcal{R}_{\frac{N-2h+1}{N}}  
 	 +  \sum_{j=1}^{2\lfloor\frac{h}{N} \rfloor+1} \mathcal{R}_{-j}, 
 \end{align}
where $\mathcal{R}_\alpha$  represents as the residual term at $s=\alpha$.  
 Letting $T \rightarrow \infty$ and employing Stirling's bound  \eqref{stirling equn} for $\Gamma(s)$  and together with Lemma \ref{boundzeta} and \ref{bound_L(s,chi)},  one can show that the horizontal integrals vanish.   Therefore,  from \eqref{residue theorem},  we get
 \begin{align}\label{4.3}
 		\frac{1}{2 \pi i} \left[ \int_{(c_0)} - \int_{(d_0)} \right] &  \Gamma(s) L(s,\chi) \zeta(Ns-(N-2h)) \left(\frac{q}{x}\right)^{s} \mathrm{d}s  
 		=  \mathcal{R}_0 + \mathcal{R}_1+ \mathcal{R}_{\frac{N-2h+1}{N}} + \sum_{j=1}^{2\lfloor\frac{h}{N} \rfloor+1} \mathcal{R}_{-j}. 
  \end{align}
Now employ \eqref{integral} in \eqref{4.3} to see that
\begin{align}\label{3.4}
		\sum_{r=1}^{q}\sum_{n=1}^{\infty}\frac{\chi(r)n^{N-2h}\exp{\left(-\frac{r}{q}n^N x\right)}}{1-\exp({-n^N x})}  = {V}_{N, h}(x; \chi)  + \sum_{j=1}^{2\lfloor\frac{h}{N} \rfloor+1} \mathcal{R}_{-j} \nonumber  \\
		 + \mathcal{R}_0 +   \mathcal{R}_1   +   \mathcal{R}_{\frac{N-2h+1}{N}},   
\end{align}
where 
\begin{equation}\label{left vertical integral}
	{V}_{N, h}(x; \chi) := \int_{(d_0)} \Gamma(s) L(s,\chi) \zeta(Ns -(N-2h)) \left(\frac{x}{q}\right)^{-s} \mathrm{d}s.
\end{equation}
The residue $\mathcal{R}_0$  is given by
\begin{align}\label{R_0}
	\mathcal{R}_0 &= \lim_{s\rightarrow 0} s \ \Gamma(s) \zeta(Ns-(N-2h)) L(s,\chi) \left(\frac{x}{q}\right)^{-s} = \zeta(-N+2h) L(0,\chi). 
\end{align}
Now we emphasis that the residue at $s=1$ depends on $q$.  Mainly,  the residue at $s=1$ is given by
\begin{align}\label{residue at 1}
 \mathcal{R}_1(x) :=	\mathcal{R}_1 
& = \begin{cases} \frac{\zeta(2h)}{x},  & \text{if} \ q=1,  \\
	0,  & \text{ if}\ q >1, 
\end{cases}
 \end{align}
where $\mathcal{R}_1(x)$ is the same function defined  as in \eqref{R(x)}.  The residue $\mathcal{R}_\frac{N-2h+1}{N}$ can be calculated by the following way
\begin{align}\label{R_N}
	\mathcal{R}_\frac{N-2h+1}{N} &= \lim_{s\rightarrow \frac{N-2h+1}{N}}\left(s- \frac{N-2h+1}{N}\right) \Gamma(s) \zeta(Ns-(N-2h)) L(s,\chi) \left(\frac{x}{q}\right)^{-s} \nonumber\\
	&= \frac{1}{N} \Gamma\left(\frac{N-2h+1}{N}\right) L\left( \frac{N-2h+1}{N},\chi \right) \left(\frac{x}{q}\right)^{-\left(\frac{N-2h+1}{N}\right)}. 
	\end{align}
Finally,  with the help of \eqref{Res_Gamaa} and \eqref{L-value},   the residue $\mathcal{R}_{-j}$ at $s=-j$,  for $j\in  \mathbb{N}$,  becomes
\begin{align}\label{R_{-j}}
	\mathcal{R}_{-j}  &= \lim_{s\rightarrow -j} (s+j) \Gamma(s) L(s,\chi)\zeta(Ns-(N-2h)) \left(\frac{x}{q}\right)^{-s} \nonumber\\
	&= \frac{(-1)^j}{j!} L(-j,\chi) \zeta(-Nj -N +2h)\left(\frac{x}{q}\right)^{j} \nonumber\\
	&= \frac{(-1)^{j+1}}{(j+1)!} B_{{j+1},\chi} \  \zeta(-Nj -N +2h)\left(\frac{x}{q}\right)^{j}.
\end{align}
Now the only thing is left is to show that the integral $ {V}_{N, h}(x; \chi)$ in \eqref{left vertical integral},  is nothing but the expression $\mathcal{J}_\chi(x)$,  defined as in \eqref{J}.
To simplify the integral \eqref{left vertical integral},   we first employ asymmetric form of the functional equations for $\zeta(s)$ and $L(s, \chi)$ respectively,  that is,  \eqref{asymetric functional equation for zeta} and \eqref{Dirichlet L-function}.   Thus,  upon simplification,  we see that
\allowdisplaybreaks
 \begin{align} \label{I(x)}
 {V}_{N, h}(x; \chi)	 & = \frac{ \mathcal{G}( \chi)}{ \pi i^a} \frac{1}{2 \pi i} \smashoperator{\int_{(d_0)}} \Gamma(s) \left( \frac{2\pi}{q} \right)^s \sin\left(\frac{\pi}{2}(s+a)\right) \Gamma(1-s) L(1-s, \bar{\chi})  \nonumber\\
 	& \times \frac{(2\pi)^{Ns}}{ \pi (2\pi)^{N-2h}}  \sin \left(\frac{\pi}{2}(Ns-(N-2h))\right) \Gamma(1-Ns +N- 2h) \nonumber\\ 
 	& \times \zeta(1-Ns+N-2h) \left(\frac{x}{q}\right)^{-s} \mathrm{d}s  \nonumber\\
 	= &\left(\frac{1}{2\pi}\right)^{N-2h+1} \frac{\mathcal{G}(\chi)}{i^a} \frac{1}{2 \pi i} \smashoperator{\int_{(d_0)}}\frac{2\sin\left(\frac{\pi}{2}(s+a)\right)}{\sin(\pi s)} \sin\left(\frac{\pi}{2}(Ns-(N-2h))\right) \nonumber \\
 	& \times L(1-s,\bar{\chi}) \Gamma(1-Ns+N-2h) \zeta(1-Ns+N-2h) \left(\frac{(2\pi)^{N+1}}{x}\right)^s \mathrm{d}s \nonumber \\
 	= &\left(\frac{1}{2\pi}\right)^{N-2h+1} \frac{\mathcal{G}(\chi)}{i^a} \frac{1}{2 \pi i} \smashoperator{\int_{(d_0)}} \left[\frac{\cos(\frac{\pi}{2}a)}{\cos(\frac{\pi}{2}s)} +\frac{\sin(\frac{\pi}{2}a)}{\sin(\frac{\pi}{2}s)} \right]\sin\left(\frac{\pi}{2}(Ns-(N-2h))\right)  \nonumber\\
 	& \times  L(1-s,\bar{\chi})\Gamma(1-Ns+N-2h) \zeta(1-Ns+N-2h) \left(\frac{(2\pi)^{N+1}}{x}\right)^s \mathrm{d}s.
 \end{align} 
Note that we have used the Euler's reflection identity \eqref{Euler's reflection} for $\Gamma(s)$ to obtain the second equality.  Now, we would like to change the variable $1-Ns+N-2h \longrightarrow s_1$. In that case, $d_0 < -2 \lfloor \frac{ h}{N} \rfloor -1 \Rightarrow d_0 <  -2 \frac{h}{N}+1 $ leads to   $\label{d_0} \Re(s_1) =d_1=1+N-2h- Nd_0 >1.$ 
Therefore,  after substitution,  \eqref{I(x)} becomes 
\begin{align}
	{V}_{N, h}(x; \chi) 
	=&  \left(\frac{1}{2\pi}\right)^{N-2h+1} \frac{\mathcal{G}(\chi)}{i^a N} \frac{1}{2 \pi i} \int_{(d_1)} \left[\frac{\cos(\frac{\pi}{2}a)}{\cos\left(\frac{\pi}{2} + \frac{\pi (1-s_1 -2h)}{2N}\right)} 
	 + \frac{\sin(\frac{\pi}{2}a)}{\sin\left(\frac{\pi}{2} + \frac{\pi (1-s_1 -2h)}{2N}\right)} \right] \nonumber \\
	 & \times \sin\left(\frac{\pi}{2}(1-s_1)\right) L\left(\frac{s_1 +2h -1}{N}, \bar{\chi}\right) \Gamma(s_1) \zeta(s_1)  \left(\frac{(2\pi)^{N+1}}{x}\right)^{\frac{1-s_1 +N -2h}{N}} \mathrm{d}s_1  \nonumber \\ 
	=  &\frac{\mathcal{G}(\chi)}{i^a N} \left(\frac{2\pi}{x}\right)^{\frac{N-2h+1}{N}}  \frac{1}{2 \pi i} \int_{(d_1)} \left[\frac{\cos(\frac{\pi}{2}a)\cos(\frac{\pi}{2}s_1)}{\sin\left(\frac{\pi}{2} \left(\frac{s_1 +2h-1}{N}\right)\right)} 
	+ \frac{\sin(\frac{\pi}{2}a) \cos(\frac{\pi}{2}s_1)}{\cos\left(\frac{\pi}{2} \left(\frac{s_1 +2h-1}{N}\right)\right)} \right] \nonumber\\
	& \times L\left(\frac{s_1 +2h -1}{N}, \bar{\chi}\right) \Gamma(s_1) \zeta(s_1) (X_N)^{-\frac{s_1}{N}}  \mathrm{d}s_1,  \nonumber \\
 	= &  \frac{\mathcal{G}(\chi)}{i^a N} \left(\frac{2\pi}{x}\right)^{\frac{N-2h+1}{N}}\left[\cos\left(\frac{\pi}{2}a\right) \mathcal{U}(X_N) + \sin\left(\frac{\pi}{2}a\right)\mathcal{V}(X_N) \right],\label{i(x)}
 \end{align}
where $ X_N = (2 \pi)^{N+1}/{x}$,  and 
\begin{align}
	\mathcal{U}(X_N) := \frac{1}{2 \pi i} \int_{(d_1)} \frac{\cos\left(\frac{\pi}{2} s_1\right)}{\sin\left(\frac{\pi}{2} \left(\frac{s_1 +2h-1}{N}\right)\right)} \Gamma(s_1) \zeta(s_1)  L\left(\frac{s_1 +2h -1}{N}, \bar{\chi}\right) X_N^{-\frac{s_1}{N}} \mathrm{d}s_1,  \label{U(X_N)}  \\
	\mathcal{V}(X_N) := \frac{1}{2 \pi i} \int_{(d_1)} \frac{\cos\left(\frac{\pi}{2} s_1 \right)}{\cos\left(\frac{\pi}{2} \left(\frac{s_1 +2h-1}{N}\right)\right)} \Gamma(s_1) \zeta(s_1)  L\left(\frac{s_1 +2h -1}{N}, \bar{\chi}\right) X_N^{-\frac{s_1}{N}} \mathrm{d}s_1.  \label{V(X_N)}
\end{align}
Now, our main aim is to evaluate the integrals  $\mathcal{U}(X_N)$ and $\mathcal{V}(X_N)$.  First,  let us concentrate on the integral $\mathcal{U}(X_N)$. To evaluate this integral,  we would like to expand $\zeta(s_1) L\left(\frac{s_1 +2h -1}{N}, \bar{\chi}\right) $ into a Dirichlet series.  We can do this as we know $\Re(s_1) >1 $ and $\Re( \frac{s_1 +2h -1}{N}) = 1 +\Re(s) >1 $ as $s_1 =1-Ns+N -2h$.  Hence,  we can write
\begin{align} \label{zeta and L}
	  L\left(\frac{s_1 +2h -1}{N}, \bar{\chi}\right)  \zeta(s_1) & =  \sum_{n=1}^{\infty}\frac{\bar{\chi}(n)}{n^{\frac{s_1 +2h -1}{N}}}  \sum_{m=1}^{\infty} \frac{1}{m^{s_1}} 
	= \sum_{m=1}^{\infty} \ \sum_{n=1}^{\infty}\frac{ \bar{\chi}(n)}{n^{\frac{2h-1}{N}}}\ (m^N n)^{-\frac{s_1}{N}}. 
\end{align}
Further,  one can check that
\begin{equation} \label{cos}
\cos\left(\frac{\pi}{2}s_1\right) = (-1)^{h+1} \sin\left(N\left(\frac{\pi}{2}\frac{s_1 +2h -1}{N}\right)\right).
\end{equation} 
Now,  invoking Lemma \ref{use in k even r even},  and together with  \eqref{zeta and L} and \eqref{cos},  the integral $\mathcal{U}(X_N)$ in \eqref{U(X_N)} becomes
\allowdisplaybreaks
\begin{align}\label{U(x_N)}
	\mathcal{U}(X_N) & =(-1)^{h+1} \frac{1}{2 \pi i} \int_{(d_1)} \sideset{}{''}\sum_{j=-(N-1)}^{(N-1)} \exp\left(\frac{ij \pi(s_1 +2h -1)}{2N}\right) \Gamma(s_1) \nonumber\\
	  & \hspace{4cm}\times \sum_{m=1}^{\infty}\  \sum_{n=1}^{\infty}\frac{ \bar{\chi}(n)}{n^{\frac{2h-1}{N}}}\ (m^N n)^{-\frac{s_1}{N}}X_N^{-\frac{s_1}{N}}   \mathrm{d}s_1 \nonumber\\
	 &= (-1)^{h+1} \sideset{}{''}\sum_{j=-(N-1)}^{(N-1)} \exp\left(\frac{i \pi j}{2N}(2h-1)\right) \sum_{m=1}^{\infty}\  \sum_{n=1}^{\infty}\frac{ \bar{\chi}(n)}{n^{\frac{2h-1}{N}}} \nonumber\\
	 & \hspace{4cm}\times   \frac{1}{2 \pi i} \int_{(d_1)} \Gamma(s_1) \left(X_N^{\frac{1}{N}}m n^{\frac{1}{N}}\exp{\left(-\frac{i \pi j}{2N}\right)}\right)^{-s_1} \mathrm{d}s_1.
\end{align}
Here,  we can verify that $\Re\left(X_N^{\frac{1}{N}}m n^{\frac{1}{N}}\exp{\left(-\frac{i \pi j}{2N}\right)}\right) =X_N^{\frac{1}{N}}m n^{\frac{1}{N}} \cos\left(\frac{ \pi j}{2N}\right)>0,$ since $X_N^{\frac{1}{N}}m n^{\frac{1}{N}}$
is a positive real number and $\cos\left(\frac{ \pi j}{2N}\right) >0$ as $j$ lies in the interval $-(N-1)\leq j \leq (N-1).$ Again,  using the inverse Mellin transform \eqref{Inverse Mellin} of $\Gamma(s)$ in \eqref{U(x_N)},  it reduces to
\begin{align}
&	\mathcal{U}(X_N) \nonumber \\
 =& (-1)^{h+1} \sideset{}{''}\sum_{j=-(N-1)}^{(N-1)} \exp\left(\frac{ i \pi j}{2N}(2h-1)\right)  \sum_{m=1}^{\infty}\  \sum_{n=1}^{\infty}\frac{ \bar{\chi}(n)}{n^{\frac{2h-1}{N}}} \exp\left(-X_N^{\frac{1}{N}}m n^{\frac{1}{N}}\exp{\left(-\frac{i \pi j}{2N}\right)}\right).   \label{2nd final form_U_X_N}
\end{align}
Now we  simplify the inner double sum as 
\begin{align}
 \sum_{n=1}^{\infty}\frac{\bar{\chi}(n)}{n^{\frac{2h-1}{N}}}\sum_{m=1}^{\infty} \left(\exp\left(-X_N^{\frac{1}{N}} n^{\frac{1}{N}}\exp{\left(-\frac{i \pi j}{2N}\right)}\right)\right)^m 
&= \sum_{n=1}^{\infty}\frac{ \bar{\chi}(n)}{n^{\frac{2h-1}{N}}} \frac{1}{\exp\left(X_N^{\frac{1}{N}} n^{\frac{1}{N}}\exp{\left(-\frac{i \pi j}{2N}\right)}\right)-1}.  \label{simple_sum}
\end{align}
Substitute \eqref{simple_sum} in \eqref{2nd final form_U_X_N} to obtain the final expression for $\mathcal{U}(X_N)$ as 
\begin{align}\label{u(x)}
	\begin{split}
	\mathcal{U}(X_N) = &(-1)^{h+1} \sideset{}{''}\sum_{j=-(N-1)}^{(N-1)} \exp\left(\frac{i \pi j}{2N}(2h-1)\right) \sum_{n=1}^{\infty}\frac{ \bar{\chi}(n)}{n^{\frac{2h-1}{N}}} \frac{1}{\exp\left(X_N^{\frac{1}{N}} n^{\frac{1}{N}}\exp{\left(-\frac{i \pi j}{2N}\right)}\right)-1}.  
		\end{split}
\end{align}
Now we shall try to simplify the integral $\mathcal{V}(X_N)$ defined in \eqref{V(X_N)}.  Moreover,  using the trigonometric identity \eqref{use in k even r odd},  and putting \eqref{zeta and L} and \eqref{cos} in \eqref{V(X_N)}, for $N \in 2 \mathbb{N}$,  we obtain
\begin{align*}
	\mathcal{V}(X_N) = &(-1)^{h+1}\frac{1}{2 \pi i}\int_{(d_1)} (-1)^{\frac{N}{2}}\sideset{}{''}\sum_{j=-(N-1)}^{(N-1)}\, i^j \exp\left(\frac{ij \pi(s_1 +2h -1)}{2N}\right) \Gamma(s_1)\\
	 & \hspace{5cm}\times \sum_{m=1}^{\infty}\  \sum_{n=1}^{\infty}\frac{ \bar{\chi}(n)}{n^{\frac{2h-1}{N}}}\ (m^N n)^{-\frac{s_1}{N}}X_N^{-\frac{s_1}{N}}  \mathrm{d}s_1.  
\end{align*}
Simplification of $ \mathcal{V}(X_N)$ goes in a similar direction to $\mathcal{U}(X_N)$,  hence we omit.  Upon simplification,  one can deduce that
\begin{align} \label{V_(X_N)}
	\mathcal{V}(X_N)  = (-1)^{h+1} & \sideset{}{''}\sum_{j=-(N-1)}^{(N-1)} (-1)^{\frac{N}{2}} i^j   \exp\left(\frac{ i \pi j}{2N}(2h-1)\right)  \sum_{n=1}^{\infty}\frac{ \bar{\chi}(n)}{n^{\frac{2h-1}{N}}} \frac{1}{\exp\left(X_N^{\frac{1}{N}} n^{\frac{1}{N}}\exp{\left(-\frac{i \pi j}{2N}\right)}\right)-1}.  
\end{align}
Note that,  the above expression for $\mathcal{V}(X_N)$ in \eqref{V_(X_N)} is true only for even $N$ as we have the identity \eqref{use in k even r odd},  which is valid for even $N$.  Now,  using \eqref{a} in \eqref{i(x)},  one can see that 
\begin{equation} \label{ij(x)}
{V}_{N, h}(x; \chi)  =\begin{cases}
		\frac{\mathcal{G}(\chi)}{N} \left(\frac{2\pi}{x}\right)^{\frac{N-2h+1}{N}} \mathcal{U}(X_N),   & \text { if } \chi \,  \text {is even},  \\
	\frac{1}{i}	\frac{\mathcal{G}(\chi)}{ N} \left(\frac{2\pi}{x}\right)^{\frac{N-2h+1}{N}} \mathcal{V}(X_N), & \text { if } \chi \,  \text {is odd}.
	\end{cases}
\end{equation}
Eventually, combining  \eqref{u(x)}-\eqref{ij(x)},  we arrive at
\begin{align}
{V}_{N, h}(x; \chi) =&(-1)^{h+1}\frac{\mathcal{G}(\chi)}{ N}\left(\frac{2\pi}{x}\right)^\frac{N-2h+1}{N} \sideset{}{''}\sum_{j=-(N-1)}^{N-1} v_{N,\chi}(j) \exp\left(\frac{i\pi j(2h-1)}{2N}\right) \nonumber
\\ &\times  G_{j}\left( \frac{ 2h-1}{N},  \frac{2\pi}{x},  \bar{\chi} \right),  \label{V_N,h}
\end{align}
where $v_{n,\chi}(j)$ and $G_{j}\left( \frac{ 2h-1}{N}, x,  \chi \right)$ is same as we defined in \eqref{vv} and \eqref{G-function}, respectively.  The above final expression of the vertical integral ${V}_{N, h}(x; \chi)$ is nothing but the expression $ \mathcal{J_\chi}(x)$ defined as in  \eqref{J}.  This settles the proof of Theorem \ref{Main theorem}. 
\end{proof}

\begin{proof}[Theorem {\textup{\ref{character analogue_GJKM}}}][]
Let $N \geq 1$ be an odd integer and $\chi$ be an even character modulo $q$.  
As Theorem \ref{Main theorem} is valid for any $h \in \mathbb{Z}$,  so we choose a particular sequence of $h$ such that $\frac{N-2h+1}{N}=-2m $ for any non-zero integer $m$.  
Further,   we substitute $x=2^{N}\alpha$ with $ \alpha \beta^N = \pi^{N+1}$  in \eqref{Main expression}  to see that 
$$ F(2Nm+1 ,  2^{N}\alpha ,  \chi) = \sum_{r=1}^{q}\sum_{n=1}^{\infty}\frac{\chi(r)\exp{\left(-\frac{r}{q}(2n)^{N} \alpha \right)}}{n^{2Nm+1} \left( 1 -\exp({-(2n)^N \alpha})\right)}, $$
and 
$$ G_{j}\left( 2m+1 , 2^{N} \alpha ,  \chi \right) =   \sum_{n=1}^{\infty}\frac{1}{n^{2m+1}} \frac{\chi(n)}{\exp \left( 4 \pi \left( n \alpha \right)^{1/N}\exp\left(-\frac{i\pi j}{2N}\right)\right)-1}.$$
Moreover,  we have
\allowdisplaybreaks
\begin{align}
		F(2Nm+1  ,  2^{N}\alpha , \chi)
	&	=  \zeta(2Nm+1) L(0,\chi) + \mathcal{R}_1(2^{N} \alpha)  + \frac{1}{N}\Gamma (-2m) L\left(-2m,\chi\right)\left(\frac{2^{N}\alpha}{q}\right)^{2m} \nonumber\\
	& +\sum_{j=1}^{2 \lfloor{\frac{N+1}{2N}+m}\rfloor+1} \frac{(-1)^{j+1}}{(j+1)!} B_{{j+1},\chi} \  \zeta(2Nm-Nj+1)\left(\frac{2^{N} \alpha }{q}\right)^{j}  + \mathcal{J}_\chi(2^{N} \alpha),  \label{DM}
	\end{align} 
where   \begin{equation} 
	\mathcal{R}_1(2^N \alpha):=\begin{cases}
		\frac{\zeta(N+1+2Nm)}{2^N \alpha},  & \text { if } q =1, \\
	0, & \text { if } q> 1, 
	\end{cases}\\
\end{equation}
and
\begin{align}\label{J}
	\mathcal{J}_\chi(2^{N} \alpha) = (-1)^{\frac{N+1}{2}+Nm+1}   \frac{\mathcal{G}(\chi)}{ N}  \left(\frac{2^{N} \alpha}{2\pi}\right)^{2m} \sideset{}{''}\sum_{j=-(N-1)}^{N-1} &  v_{N,\chi}(j) \exp\left(\frac{i\pi j(2m+1)}{2}\right) \nonumber \\
	 &\times G_{j}\left( 2m+1,  \frac{2\pi}{2^{N} \alpha},  \bar{\chi} \right).  
\end{align}
From \eqref{vv},  we know $v_{N,\chi}(j)=1$ as we are dealing with even character $\chi$.  Moreover,  from Lemma \ref{Zeros_Dirichlet_L} it is clear that $L(-2m,  \chi)=0$ and $\Gamma(s)$ has simple pole at $s=-2m$.  Letting $s=2m+1$ and $\chi$ even,  the functional equation \eqref{Dirichlet L-function} of $L(s, \chi)$ implies that 
\begin{equation}\label{use_function}
\Gamma(-2m) L(-2m,\chi) =	(-1)^m \frac{q}{2 \mathcal{G}(\overline{\chi}) } \left( \frac{q}{2 \pi} \right)^{2m}	L(2m+1,\overline{\chi}). 
	\end{equation}
Employ the relation $\alpha \beta^{N}=\pi^{N+1}$ to verify that
\begin{align}\label{some relation}
	\left(\frac{2^{N}\alpha}{2 \pi}\right)^{2m} & = 2^{2m(N-1)} \left( \frac{\alpha}{\beta} \right)^\frac{2Nm}{N+1}  \quad \text{and}\quad	2 \pi \left( \frac{2 \pi n}{2^{N} \alpha}\right)^\frac{1}{N}  =\beta  (2n)^\frac{1}{N}.  
	\end{align}
Making use of \eqref{use_function}, \eqref{some relation} and Euler's formula \eqref{zeta(2m)} in \eqref{DM},  we get
\allowdisplaybreaks\begin{align}
& \sum_{r=1}^{q}\sum_{n=1}^{\infty}\frac{\chi(r)\exp{\left(-\frac{r}{q}(2n)^{N} \alpha \right)}}{n^{2Nm+1} \left( 1 -\exp({-(2n)^N \alpha})\right)} - \zeta(2Nm+1) L(0,\chi) \\
&=    (-1)^m \frac{ 2^{2m(N-1)} q }{2N \mathcal{G}(\bar{\chi})}   \left( \frac{\alpha}{\beta} \right)^\frac{2Nm}{N+1} L(2m+1,  \bar{\chi}) \nonumber\\
	& + (-1)^{ m + \frac{N+3}{2}}   \frac{\mathcal{G}(\chi)}{ N} 2^{2m(N-1)} \left( \frac{\alpha}{\beta} \right)^\frac{2Nm}{N+1}  \nonumber \\
	& \times \sideset{}{''}\sum_{j=-(N-1)}^{N-1}  \exp\left(\frac{i\pi j(2m+1)}{2}\right) \sum_{n=1}^\infty \frac{\bar{\chi}(n)}{n^{2m+1} \left( \exp\left(  \beta (2n)^{1/N} e^{- \frac{i \pi j}{2N}}  \right)-1 \right)} + \mathcal{R}_1(2^N \alpha)  \nonumber \\
	 & +  (-1)^{ m + \frac{N+3}{2}}  2^{2Nm}  \sum_{j=1}^{\lfloor{\frac{N+1}{2N}+m}\rfloor}  \frac{(-1)^{j}}{q^{2j-1}} \frac{B_{{2j},\chi}}{ (2j)! } \frac{B_{2N(m-j)+N+1}}{(2N(m-j)+N+1 )!} \alpha^{2j+ \frac{2N(m-j)}{N+1}} \beta^{N+\frac{2N^2(m-j)}{N+1}} \nonumber \\
	 & + \mathcal{R}_1(2^N \alpha),  \label{Final_character}
\end{align}
where $\mathcal{R}_1(2^N \alpha)$ can be written as 
   \begin{equation} 
	\mathcal{R}_1(2^N \alpha):=\begin{cases}
	(-1)^{ m + \frac{N+3}{2}}  2^{2Nm} \frac{B_{2Nm+N+1}}{(2Nm+N+1)!}  \alpha^{\frac{2Nm}{N+1}} \beta^{N+\frac{2N^2 m}{N+1}}	,  & \text { if } q =1, \\
	0, & \text { if } q> 1. 
	\end{cases}\\
\end{equation}
Finally,  multiplying by $\alpha^{- \frac{2Nm}{N+1}}$ on both sides of \eqref{Final_character} and upon simplification,  one can complete the proof of Theorem \ref{character analogue_GJKM}. 

\end{proof}

\begin{proof}[Corollary \rm{\ref{Character analogue for q>1}}][]
Substituting $N=1$ with $q>1$ in Theorem \ref{character analogue_GJKM} and employing Lemma \ref{L(0,chi)},  we can immediately obtain \eqref{character_Katayama}. 
\end{proof}

\begin{proof}[Corollary \rm{\ref{L(1/3,chi)}}][]
	Putting $h=1$,  $N=3$ and $\chi= \chi_5 $,  an even primitive character modulo $5$ in Theorem \ref{Main theorem},   we get
	\begin{align} \label{equation 0}
		\begin{split}
		\sum_{r=1}^{5} \sum_{n=1}^{\infty} \chi_5(r) \frac{n \exp\left(-\frac{r}{5} n^3 x\right)}{1- \exp\left(-n^3 x\right)}& = \zeta(-1)L(0,\chi_5) + \frac{1}{3} \Gamma\left(\frac{2}{3}\right) L\left(\frac{2}{3}, \chi_5\right) \left(\frac{x}{5}\right)^{-\frac{2}{3}} \\
		&+ \frac{\mathcal{G}(\chi_5)}{3} \left(\frac{2\pi}{x}\right)^{\frac{2}{3}}
		 \sum_{n=1}^{\infty} \frac{\bar{\chi_5}(n)}{n^\frac{1}{3}}    \sideset{} {''}\sum_{j=-2}^{2} \frac{\exp\left(\frac{i\pi j }{6}\right)}{\exp \left(2\pi \left(\frac{2\pi n}{x}\right)^\frac{1}{3}\exp\left(\frac{-i\pi j}{6}\right)\right) -1}.
	\end{split}
	\end{align}
Note that $L(0,\chi_5)=0$ since $\chi_5$ is an even character.  
Now,  using the functional equation \eqref{Dirichlet L-function} of $L(s,\chi)$,  one can see that 
 \begin{equation} \label{L(2/3,chi)}
	L\left(\frac{2}{3},\chi_5\right)=\mathcal{G}(\chi_5)\left(\frac{2}{5}\right)^\frac{2}{3} \pi^{-\frac{1}{3}} \sin\left(\frac{\pi}{3}\right)\Gamma\left(\frac{1}{3}\right)L\left(\frac{1}{3},{\chi_5}\right).  
\end{equation}
Using \eqref{L(2/3,chi)}  and Euler's reflection formula \eqref{Euler's reflection},  we have
\begin{align}
	\label{equation 1}\frac{1}{3} \Gamma\left(\frac{2}{3}\right) L\left(\frac{2}{3}, \chi_5\right) \left(\frac{x}{5}\right)^{-\frac{2}{3}}  =   \frac{\mathcal{G}(\chi_5)}{3}  \left(\frac{2\pi}{x}\right)^{\frac{2}{3}} L\left(\frac{1}{3},\chi_5\right).
\end{align}
Upon simplification,  one can show that
\begin{align}
	\sideset{}{''}\sum_{j=-2}^{2} \frac{\exp\left(\frac{i\pi j }{6}\right)}{\exp \left(2\pi \left(\frac{2\pi n}{x}\right)^\frac{1}{3}\exp\left(\frac{-i\pi j}{6}\right)\right) -1} &= \frac{1}{\exp \left(2\pi \left(\frac{2\pi n}{x}\right)^\frac{1}{3}\right) -1} \nonumber \\
	&+ \frac{\exp\left(-\frac{i\pi }{3}\right)}{\exp \left(2\pi \left(\frac{2\pi n}{x}\right)^\frac{1}{3}\exp\left(\frac{i\pi }{3}\right)\right) -1}
\nonumber \\ &+\frac{\exp\left(\frac{i\pi }{3}\right)}{\exp \left(2\pi \left(\frac{2\pi n}{x}\right)^\frac{1}{3}\exp\left(-\frac{i\pi }{3}\right)\right) -1}.  \label{equation 2}
\end{align} 
It is clear that
{\allowdisplaybreaks \begin{align} 
& \left[\frac{\exp\left(-\frac{i\pi }{3}\right)}{\exp \left(2\pi \left(\frac{2\pi n}{x}\right)^\frac{1}{3}\exp\left(\frac{i\pi }{3}\right)\right) -1} +\frac{\exp\left(\frac{i\pi }{3}\right)}{\exp \left(2\pi \left(\frac{2\pi n}{x}\right)^\frac{1}{3}\exp\left(-\frac{i\pi }{3}\right)\right) -1}\right] \nonumber \\
&= 2\Re \left[\frac{\exp\left(\frac{i\pi }{3}\right)}{\exp \left(2\pi \left(\frac{2\pi n}{x}\right)^\frac{1}{3}\exp\left(-\frac{i\pi }{3}\right)\right) -1}\right].  \label{equation 3}
\end{align}}
Now, putting $ \gamma = 2\pi \left(\frac{2\pi n}{x}\right)^\frac{1}{3}$,  $\alpha =\frac{\pi}{3}$ and $\beta= 1$ in Lemma \ref{dixit_maji_lemma 1}, we get

\begin{align} \label{equation 4}
	2\Re \left(\frac{\exp\left(\frac{i\pi }{3}\right)}{\exp \left(2\pi \left(\frac{2\pi n}{x}\right)^\frac{1}{3}\exp\left(-\frac{i\pi }{3}\right)\right) -1}\right) = \frac{\cos\left(\sqrt{3}\pi\left(\frac{2\pi n}{x}\right)^\frac{1}{3}+\frac{\pi}{3}\right)-\frac{\exp\left(-\pi\left(\frac{2\pi n}{x}\right)^\frac{1}{3}\right)}{2}}{\cosh\left(\pi\left(\frac{2\pi n}{x}\right)^\frac{1}{3}\right)-\cos\left(\sqrt{3} \pi\left(\frac{2\pi n}{x}\right)^\frac{1}{3}\right)}. 
\end{align}
Again, 
\begin{align}\begin{split} \label{equation 5}
		\frac{1}{\exp \left(2\pi \left(\frac{2\pi n}{x}\right)^\frac{1}{3}\right) -1} &= \frac{\exp \left(-\pi \left(\frac{2\pi n}{x}\right)^\frac{1}{3}\right)}{\exp \left(\pi \left(\frac{2\pi n}{x}\right)^\frac{1}{3}\right) - \exp \left(-\pi \left(\frac{2\pi n}{x}\right)^\frac{1}{3}\right)}\\
		& =\frac{\exp \left(-\pi \left(\frac{2\pi n}{x}\right)^\frac{1}{3}\right)}{2\sinh\left(\pi \left(\frac{2\pi n}{x}\right)^\frac{1}{3}\right)}.
	\end{split}
\end{align}\\
Finally,  using \eqref{equation 1}-\eqref{equation 5} in \eqref{equation 0},  we obtain
\begin{align*}
	\sum_{r=1}^{5} \sum_{n=1}^{\infty} \chi_5(r) \frac{n \exp\left(-\frac{r}{5} n^3 x\right)}{1- \exp\left(-n^3 x\right)}&=\frac{\mathcal{G}(\chi_5)}{3}  \left(\frac{2\pi}{x}\right)^{\frac{2}{3}}\Bigg\{ L\left(\frac{1}{3},\chi_5\right) \\
	&+ \sum_{n=1}^{\infty} \frac{{\chi_5}(n)}{n^\frac{1}{3}}
 \left(\frac{e^{-u}}{2\sinh(u)} + \frac{\cos\left(\sqrt{3} u+\frac{\pi}{3}\right)-\frac{e^{-u}}{2}}{\cosh(u)-\cos\left(\sqrt{3} u\right)}\right)\Bigg\},
\end{align*}
Now substituting $x= \alpha^3$ and $ \beta = \frac{2 \pi^4}{\alpha^3}$,  one can finish the proof.   
\end{proof}

\begin{proof}[Theorem \rm{\ref{N-2h=-1 case}}][]
As  the proof goes in a similar direction as in Theorem \ref{Main theorem},  so
we only highlight the locations where the proof is different from Theorem \ref{Main theorem}. 
	We assumed that $N-2h= -1$,  thus Lemma \ref{integral lemma} gives,  for $\Re(s) = c_0 > 1$,
	\begin{equation}\label{Integrand N-2h=-1}
		\sum_{r=1}^{q} \sum_{n=1}^{\infty}\frac{\chi(r)}{n}\frac{\exp{(-\frac{r}{q}n^N x)}}{1-\exp({-n^N x})} =\frac{1}{2\pi i} \int_{(c_0)}\Gamma(s) L(s,\chi)\zeta(Ns +1) \left(\frac{x}{q}\right)^{-s} \mathrm{d}s.
	\end{equation}
 In this case,  the main difference is that $\zeta(Ns+1)$ has a pole at $s=0$, which we will divide three following cases.  
%
%
	To find residues,  we shall use the following Laurent series expansions around $s=0$:
	{\allowdisplaybreaks
	\begin{align} \label{Laurent series} 
	 \Gamma(s) &= \frac{1}{s} -\gamma + \frac{1}{2} \left(\gamma^2 + \frac{\pi^2}{6}\right)s +O(s^2), \nonumber\\
	 \zeta(s) &= \frac{1}{s-1} +\gamma+ O (s-1), \nonumber\\
	 \zeta(Ns+1)& = \frac{1}{Ns} +\gamma - \gamma_1 Ns + O(s^2),  \nonumber\\
		L(s,\chi)& = L(0,\chi) + L^{\prime}(0,\chi) s + O(s^2),  \nonumber\\
	  \left(\frac{x}{q}\right)^{-s} & = 1 - \log \left(\frac{x}{q}\right) s + O(s^2),  \nonumber\\
	  x^{-s}&  = 1 - \log (x) s + \frac{(\log (x))^2}{2} s^2 + O(s^3).
\end{align}}
The above Laurent series of $L(s,\chi)$ around $s=0$ is valid for non-principal primitive character modulo $q$ with $q>1$.	Now, we will discuss the cases one by one and calculate the residue at $s=0$ for each case.

	{\bf Case 1:} In this case,  we assume $\chi$ is  an  even  mod $q$ with $q=1$.  Then the integrand becomes $\Gamma(s)  \zeta(Ns+1) \zeta(s) x^{-s}$.
Note,  $s=0$ is a double pole of the integrand due to $\Gamma(s)$ and $\zeta(Ns+1)$. 
Using the definition of residue,  it will be 
	\begin{equation*}
	R_0 (x)	= \lim_{s\rightarrow 0} \frac{\rm d}{\mathrm{d}s}\left( s^2\Gamma(s) \zeta(Ns+1) \zeta(s)  x^{-s} \right). 
	\end{equation*} 
Using the above Laurent series expansions and upon simplification,  one can easily check that
\allowdisplaybreaks
\begin{align*}
 & \lim_{s\rightarrow 0}\left(\frac{\rm d}{\mathrm{d}s}\left( s^2\Gamma(s) \zeta(s) \zeta(Ns+1) x^{-s} \right)\right) = \frac{1}{2N} \left(\log(x) -\log (2\pi) -\gamma(N-1) \right). 
\end{align*}
Hence, 
\begin{equation}
R_0(x ) = \frac{1}{2N} \left(\log(x) -\log (2\pi) -\gamma(N-1) \right).
\end{equation}
 {\bf Case 2:} In this case, we consider $\chi$ as an even character modulo $q$
 with $q>1$. It is clear that $s=0$ is a simple zero of $L(s,\chi)$. 
 So, one pole at $s=0$ will get cancelled by this zero. 
 Thus,  $s=0$ will remain as a simple pole of integrand.  Hence, the residue at $s=0$ is given by 
 
 \begin{align*}
 	R_0(x) = \lim_{s\rightarrow 0 }s \Gamma(s) \zeta(Ns+1) L(s,\chi) \left({x}/{q}\right)^{-s}
 	&= \lim_{s\rightarrow 0} s\zeta(Ns+1) \frac{L(s,\chi)}{s} \\
 	=& \frac{1}{N}\lim_{s\rightarrow 0} \frac{L(s,\chi)}{s} \\
 	=& \frac{L^{\prime}(0,\chi)}{N}.
  \end{align*}
In the second last step, we have used the fact that $\zeta(Ns+1)$ has a simple pole at $s=0$ with residue $1/N$.  Therefore,  Lemma \ref{L'(0,chi)} gives
\begin{equation}
R_0(x)=	-\frac{1}{2N} \sum_{r=1}^{q-1} \chi(r) \log\left(
	\sin\left(\frac{r\pi}{q}\right)\right). 
\end{equation}
  Therefore, combining all these cases,  we can check that the residue at  $s=0$ is 
 	\begin{equation} \label{R_0(x)}
 	\mathcal{R}_0= 
 	\begin{cases}
 		\frac{1}{2N} \left(\gamma(1-N) -\log (2\pi) +\log(x)\right),  & \text{if} \ \chi \ \text{is even and} \  q=1,\\
 		-\frac{1}{2N} \sum_{r=1}^{q-1} \chi(r) \log\left(
 		\sin\left(\frac{r\pi}{q}\right)\right), & \text{if} \ \chi \ \text{is even and}  \ q>1,\\
 	\end{cases}\\
 \end{equation}
 which is exactly same as  we defined in \eqref{Residue at 0}. 
The residues at the remaining poles are same as in Theorem \ref{Main theorem}.
For example,  the residue at $s=1$,  that is,  $\mathcal{R}_1$ is 
\begin{equation} \label{r_1}
	\mathcal{R}_1 = \begin{cases}
	\frac{\zeta(N+1)}{x},  & \text{if} \ q=1, \\
	0, & \text{if} \ q>1, \\
	\end{cases}
\end{equation} 
 and the residue at $s=-j$ is given by
\begin{equation*}
R_{-j}(x)=	\frac{(-1)^{j+1}}{(j+1)!} B_{{j+1},\chi} \  \zeta(-Nj+1)\left(\frac{x}{q}\right)^{j},
\end{equation*}
where $1\leq j \leq 2 \lfloor \frac{N+1}{2N}\rfloor +1.$ 
Thus, the sum of these poles is given by
\begin{align} 
 \mathcal{R}(N) =\sum_{j=1}^{2\lfloor  \frac{N+1}{2N}\rfloor +1}R_{-j}(x) & =  \sum_{j=1}^{\lfloor \frac{N+1}{2N}\rfloor} \frac{B_{2j, \chi}}{(2j)!} \zeta(N+1-2Nj) \left(\frac{x}{q}\right)^{2j-1}  \nonumber \\
& = \begin{cases}
 \frac{x}{2q} L(-1,\chi), & \text{if} \ N=1, \\
 	0, & \text{if} \ N>1.
 \end{cases}\label{sum of r_-j}
\end{align}

The remaining proof is same as in  Theorem \ref{Main theorem}.  Thus,  in the proof of Theorem \ref{Main theorem},  we put $h=\frac{N+1}{2}$ in \eqref{V_N,h} to see that
\begin{align} \label{K_chi}
{V}_{N, h}(x; \chi) = (-1)^{\frac{N+3}{2}} \frac{\mathcal{G}(\chi)}{N} \sum_{n=1}^{\infty} \frac{\bar{\chi}(n)}{n} \sideset{}{''}\sum_{j=-(N-1)}^{N-1}  \frac{\exp\left(\frac{i \pi j}{2}\right)}{\exp\left(2 \pi \left(\frac{2\pi n}{x}\right)^{(1/N)} \exp{\left(\frac{-i \pi j}{2N}\right)}\right)-1}. 
\end{align}
Hence,  gathering all the residual terms  and  together with the above expression \eqref{K_chi},  we can conclude the proof of Theorem \ref{N-2h=-1 case}. 
\end{proof}

 \begin{proof}[Corollary \rm{\ref{for q=1}}][]
 Substituting $q=1$ in Theorem \ref{N-2h=-1 case},  one can immediately obtain this result. 
 \end{proof}
 \begin{proof}[Corollary \rm{\ref{for N=q=1}}][]
 Letting $N=q=1$ in Theorem \ref{N-2h=-1 case} and together with the fact that $\zeta(2)=\pi^2/6$ and $\zeta(-1)=-1/12$,  we can easily derive this identity. 
\end{proof}

\section{Concluding Remarks}

Inspired from the work of Kanemitsu et al.  \cite{KTY01},  Dixit and Maji studied the below infinite series,  for $N\in \mathbb{N},  h\in \mathbb{Z}$,  
\begin{align} \label{Dixit -Maji integral}
	\sum_{m=1}^{\infty} \frac{m^{N-2h}}{\exp(m^N x) - 1} 
\end{align}
Transformation formula for this series was crucial to find a new generalization for Ramanujan's identity for $\zeta(2m+1)$.  

 Later,   Kanemitsu et al.  further explored a character analogue of the series \eqref{Kanemitsu lambert series},  namely,  the following infinite series and its integral representation:
\begin{equation}\label{Kanemitsu_Dirichlet L}
	\sum_{r=1}^{q}\sum_{n=1}^{\infty}\frac{\chi(r)n^{N-2h}\exp{\left(-\frac{r}{q}n^N x\right)}}{1-\exp({-n^N x})} =\frac{1}{2\pi i} \int_{(c_0)}\Gamma(s) L(s,  \chi) \zeta(Ns -N +2h)  \left(\frac{x}{q}\right)^{-s} \mathrm{d}s,
\end{equation}
 where $\chi$ is a Dirichlet character modulo $q$,  and for some large positive $c_0$.   Although,  they studied it for $N\in \mathbb{N}$ and $h \in \mathbb{Z}$ with some restriction on $h$.  In this paper,  we studied the  same series \eqref{Kanemitsu_Dirichlet L} for any $N \in \mathbb{N}$ and $h\in \mathbb{Z}$.  This motivated us to find a new character analogue of Ramanujan's identity \eqref{Ramanujan's formula}.  For any $N\geq 1$ and $m \neq 0$,  our generalization will give a relation between $\zeta(2Nm+1)$ and $L(2m+1,  \chi)$.  
We were able to find a transformation formula for the series \eqref{Kanemitsu_Dirichlet L},  for any $N\in \mathbb{N}$ and $\chi$ is even,  whereas when $\chi$ is odd,  our transformation is valid only for $N\in 2\mathbb{N}$.  Thus,  it would be interesting to find a formula for odd $N \in \mathbb{N}$ when $\chi$ is an odd character.  
 It would also be an interesting problem to study the following line integral:
\begin{align}
 \frac{1}{2\pi i} \int_{(c_0)}\Gamma(s) L(s,  \chi) L(Ns -N +2h,  \psi)  \left(\frac{x}{QR}\right)^{-s} \mathrm{d}s,
\end{align}
where $\chi$ and $\psi$ are primitive characters modulo $Q$ and $R$,  respectively.

\section{Numerical verification of Theorem \ref{Main theorem}}
	\begin{table}[h]
		\centering
		{Let $N \in \mathbb{N}$ and $h\in \mathbb{Z}$ with $N-2h\neq-1$. Let $x$ be a positive real number. We took the left-hand side and right-hand side sum over $n$  considered only first $100$ terms.  This numerical data has been obtained using the Mathematica software. }
		\caption{Verification of Theorem \ref{Main theorem}} 
		\label{Table of main theorem}
		\renewcommand{\arraystretch}{1}
		{
			\begin{tabular}{|l|l|l|l|l|l|l|l|}
			
				\hline
				$N$ & $h$ & $x$ & $q$ & Dirichlet character $\chi$& Left-hand side  & Right-hand side  \\

				\hline
			  $	4$& $7$& $1.22$&$5$ & $\chi_{5,3}=(1,-1,-1,1)$ & $0.0929631$ &$0.0929631$\\      
				\hline
				$6$&$10 $&$\pi$ &$ 5$& $\chi_{5,2}= (1,  i,  -i, -1)$& $0.472922+0.138771i$ & $0.472921 +0.13877 i$  \\
				\hline
		$	8$	&$10$ &$\pi +1$ &$5$ &$\chi_{5,4}=(1,-i, i, -1)$ & $0.406854-0.109186 i$&$0.406807-0.109175 i$ \\
				\hline
			$10$	&$8$ & $e+1$& $7$& $\chi_{7,2}=(1,  e^{\frac{2\pi i}{3}}, e^{\frac{\pi i}{3}}, e^{-\frac{2\pi i}{3}}, e^{-\frac{\pi i}{3}},-1)  $&$0.462001 + 0.318763 i$ &$0.465286 + 0.318555 i$ \\
				\hline
				$2$& $-3$& $e$& $7$ &$\chi_{7,3}= (1,  e^{-\frac{2\pi i}{3}}, e^{\frac{2 \pi i}{3}}, e^{\frac{2\pi i}{3}}, e^{-\frac{2 \pi i}{3}},1)  $ &  $399.495-12.846 i$  & $399.495-12.846 i$ \\
				\hline
		\end{tabular}}
		
	\end{table}

\textbf{Acknowledgements.}
The last author wants to thank SERB for the MATRICS Grant MTR/2022/000545.  

\textbf{Data Availability Statement.} No data was used for the research described in the
article.

\end{document}